\UseRawInputEncoding
\documentclass[a4paper,dvips,10pt,oneside]{article}
\usepackage[makeroom]{cancel}
\usepackage{graphicx}
\usepackage{epstopdf}
\usepackage{subfig}
\usepackage{color}
\usepackage{float}
\usepackage{leftidx}
\usepackage{bigints}
\usepackage[affil-it]{authblk}
\setlength{\parindent}{0pt}
\captionsetup[subfigure]{labelformat=empty}
\usepackage{amssymb, amsmath,dsfont,url,epsfig}
\usepackage[left=1in, right=1in, top=1.1in, bottom=1.1in, includefoot, headheight=13.6pt]{geometry}
\usepackage[T1]{fontenc}
\usepackage{titlesec, blindtext, color}
\definecolor{gray75}{gray}{0.75}
\providecommand{\keywords}[1]{\textbf{keywords:}#1}

\newcommand{\sln}{\linespread{1}}
\newcommand*{\email}[1]{\href{mailto:#1}{\nolinkurl{#1}} } 
\titleformat{\chapter}[block]{\LARGE\bfseries\sln}{Chapter \thechapter}{11pt}{\newline\huge\bfseries}
\newtheorem{theorem}{Theorem}[section]
\newtheorem{remark}{Remark}[section]
\newtheorem{definition}{Definition}[section]

\newenvironment{proof}{\paragraph{Proof:}}{\hfill$\square$}
\newtheorem{lemma}{Lemma}[section]
\newtheorem{proposition}{Proposition}[section]
\newtheorem{corollary}{Corollary}[section]
\begin{document}
\title{The Isoperimetric Problem In Randers Planes}

\author{Arti Sahu
  \thanks{E-mail: \texttt{arti.sahu@bhu.ac.in}}}
\affil{DST-CIMS, Banaras Hindu University, Varanasi-221005, India}

\author{Ranadip Gangopadhyay
  \thanks{E-mail: \texttt{ranadip.gangopadhyay1@bhu.ac.in}}}
\affil{DST-CIMS, Banaras Hindu University, Varanasi-221005, India}

\author{Hemangi Madhusudan Shah
  \thanks{E-mail: \texttt{hemangimshah@hri.res.in}}}
\affil{Harish-Chandra Research Institute, A CI of Homi Bhaba National Institute, Chhatnag Road, Jhunsi, Prayagraj-211019, India}

\author{Bankteshwar Tiwari
  \thanks{E-mail: \texttt{btiwari@bhu.ac.in}}}
\affil{DST-CIMS, Banaras Hindu University, Varanasi-221005, India}

\maketitle
\begin{abstract}
In this paper, the isoperimetric problem in Randers planes, $(\mathbb{R}^2,F=\alpha +\beta)$, which are slight deformation of the Euclidean plane $(\mathbb{R}^2,\alpha)$ by suitable one forms $\beta$, have been studied. We prove that the circles centred at the origin achieves the local maximum area of the isoperimetric problem with respect to well known volume forms in Finsler geometry.
\end{abstract}
\hspace{1.0cm}\keywords{~Isoperimetric problem, Randers Planes, Minkowski Planes, Calculus of variations.}
\section{Introduction}
The isoperimetric problem has a history of more than two thousand years. The original isoperimetric problem can be posed as: to find a simple closed curve of given perimeter which encloses the maximum area. Mathematically, the isoperimetric problem is to find a simple closed curve for which the area achieves the equality in the isoperimetric inequality $L^2\ge 4\pi A$.

The ancient greeks knew that a circle has greater area than any polygon with the same perimeter \cite{VB}. The famous Roman poet Virgil mentioned about this problem in his epic `Aeneid'. Although the statement is really very simple and the possible answer was known to the mankind, but the first step to obtain the mathematical proof was due to Swiss geometer J. Steiner in $1839$. He proved that circle is the only possible solution in plane geometry. The first complete proof was given by Weierstrass  using the techniques of calculus of variations. After that many different proofs were published by several mathematicians including Hurwitz, Schimdt \cite{AH, ES, JS}. For constant curvature spaces the isoperimetric inequality is given by $L^2\ge 4\pi A-kA^2,$
where $k$ is the curvature of the space. The study of the isoperimetric inequality has been done extensively in e.g. \cite{AGBE,MG,MP,STY}. For the Finsler case  Zhou studied the isoperimetric problem in 2-dimensional spherically symmetric spaces with zero curvature \cite{LZ1} with respect to the Busemann-Hausdorff volume form. Zhan and Zhou studied this problem with respect to the Holmes-Thompson volume form \cite{LZ2}. More precisely, they studied the isoperimetric problem for the following Berwald metric,
\begin{equation*}
F(x,y)=\frac{(\sqrt{|y|^2-(|x|^2|y|^2)-\langle x,y\rangle^2}+\langle x,y\rangle)^2}{(1-|x|^2)(\sqrt{|y|^2-(|x|^2|y|^2)-\langle x,y\rangle^2)}}
\end{equation*} 
and showed that the circles $\gamma_0(t)=(a\cos t, a\sin t)$ are the local solutions of the isoperimetric problem in $2$-dimensional spherically symmetric Finsler metrics with zero flag curvature. Li and Mo \cite{LM} studied the isoperimetric problem for the two dimensional Funk space, which is a spherically symmetric metric with constant flag curvature $-1$ and is given by,
\begin{equation*}
F(x,y)=\frac{(\sqrt{|y|^2-(|x|^2|y|^2)-\langle x,y\rangle^2}+\langle x,y\rangle)}{2(1-|x|^2)}.
\end{equation*}
It should be noted that the Funk metric, $F$ defined above, is an example of Randers metric. In this paper, we consider the Randers metric, which is a class of simplest non-Riemannian Finsler metric introduced by G. Randers, and is defined by $F=\alpha +\beta$, where $\alpha$ is a Riemannian metric and $\beta$ is a one-form with $b:=\|\beta\|_{\alpha}<1$, \cite{GR}.\\
In order to simplify the calculations we choose $\beta= b(\cos \theta(x^1,x^2) dx^1 +\sin\theta(x^1,x^2) dx^2)$, where $\theta$ is a function of $x=(x^1,x^2)\in \mathbb{R}^2$ and $0<b<1$. We show that under certain conditions on $\theta$, circles centred at the origin are solutions of the isoperimetric problem locally with respect to different known volume forms on the Randers manifolds. More precisely, we prove the following:
\begin{theorem}\label{thm1}
Let $(\mathbb{R}^2, F)$, be a Randers plane, where $$F=\sqrt{(dx^1)^2+(dx^2)^2}+ b(\cos \theta(x^1,x^2) dx^1 +\sin\theta(x^1,x^2) dx^2).$$ If $\theta=c$, or,  $t+c$, for some constant $c$ on the circles of the form $\gamma_0=(a\cos t,a\sin t)$, then $\gamma_0$ is a solution of the isoperimetric problem, with respect to each of the volume forms: the Busemann-Hausdorff, the Holmes-Thompson, the Maximum, and the Minimum.
\end{theorem}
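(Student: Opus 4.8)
The plan is to exploit the fact that the background metric $\alpha$ is Euclidean and that $b=\|\beta\|_\alpha$ is a constant, so that each of the four volume forms collapses to a positive \emph{constant} multiple of the Lebesgue area form $dx^1\wedge dx^2$. I would compute each density at a point where $\beta=b\,dx^1$: there the indicatrix $\{F\le 1\}$ is an ellipse of area $\pi(1-b^2)^{-3/2}$, giving $\sigma_{BH}=(1-b^2)^{3/2}$, and via the Legendre transform (the dual unit ball turns out to have Euclidean area exactly $\pi$) one gets $\sigma_{HT}=1$; using the Randers identity $\det g_{ij}=(F/\alpha)^{3}$ and taking the extremal values of $\sqrt{\det g_{ij}(x,y)}$ over the $\alpha$-unit circle, where $\beta/\alpha$ ranges over $[-b,b]$, yields $\sigma_{\max}=(1+b)^{3/2}$ and $\sigma_{\min}=(1-b)^{3/2}$. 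Since each $\sigma_\bullet$ is a positive constant, the Finslerian area is $A=\sigma_\bullet\,A_{\mathrm{eucl}}$, so for all four volume forms the problem is one and the same: maximize the Euclidean area subject to the constraint that the Finsler length $L=\oint F=L_\alpha+\oint\beta$ be fixed.

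Next I would set this up as a constrained variational problem. Writing a closed curve as a polar graph $r=R(t)$, one has $A_{\mathrm{eucl}}=\tfrac12\int_0^{2\pi}R^2\,dt$ and $L_\alpha=\int_0^{2\pi}\sqrt{R^2+R'^2}\,dt$, and I would form $\mathcal J=A-\lambda L$ and write its Euler--Lagrange equation. Substituting the circle $R\equiv a$ (that is, $\gamma_0=(a\cos t,a\sin t)$) collapses the Euler--Lagrange system to a single algebraic identity whose only obstruction is that $\Phi:=\cos\theta\,\theta_{x^1}+\sin\theta\,\theta_{x^2}$, the density of $d\beta$, be constant along $\gamma_0$; the multiplier is then forced to equal $\lambda=\sigma_\bullet a/(1+b\sin c)$. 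The two hypotheses are checked separately. If $\theta=c$, then $\beta$ is exact, so $\oint\beta=0$, the problem is \emph{exactly} the Euclidean isoperimetric problem, and $\gamma_0$ is the global solution. If $\theta=t+c$, i.e.\ $\theta$ is the polar angle plus a constant, a direct computation gives $\Phi=\sin c/a$, constant along $\gamma_0$, so $\gamma_0$ is a critical curve.

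The decisive simplification in the case $\theta=t+c$ is that $d\beta=(b\sin c/r)\,dx^1\wedge dx^2$, so by Green's theorem $\oint\beta=b\sin c\int_0^{2\pi}R(t)\,dt$ is \emph{linear} in the graph function $R$. Hence the $\beta$-term contributes nothing to the second variation, and $\delta^2\mathcal J$ reduces to the classical Euclidean one: for a periodic perturbation $R=a+\epsilon w$ one finds $\delta^2\mathcal J=\sigma_\bullet\!\int w^2-(\lambda/a)\!\int w'^2=\sigma_\bullet\bigl[\int w^2-(1+b\sin c)^{-1}\!\int w'^2\bigr]$. The task is then to show this is $\le 0$ on the admissible (length-preserving, hence mean-zero) variations, which is a Wirtinger/Poincar\'e inequality: on the Fourier modes $w=\cos(nt),\sin(nt)$ with $n\ge 2$ one has $\int w'^2=n^2\int w^2\ge 4\int w^2$, and since $1+b\sin c<2$ this forces $\delta^2\mathcal J<0$, giving strict local maximality transverse to the first harmonics.

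The main obstacle is precisely the first-harmonic modes $w=\cos t,\sin t$, where Wirtinger degenerates to equality. These are exactly the directions that translate the circle infinitesimally, and because the structure with $\theta=t+c$ is \emph{not} translation invariant (it is singular at the origin) they cannot be dismissed as trivial as in the Euclidean case: one computes $\oint\beta$ to be unchanged along $R=a+\epsilon\cos t$ while $L_\alpha$ and $A_{\mathrm{eucl}}$ both grow, so the sign of $\delta^2\mathcal J$ in these directions is genuinely delicate and must be analyzed by hand. I would therefore concentrate the core of the proof here, arguing that such perturbations displace the centre away from the origin and are thus controlled by the hypothesis that the competitor be a circle \emph{centred at the origin} (equivalently, restricting the admissible class to variations orthogonal to the first harmonics). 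Once the first-harmonic directions are settled, the four volume forms assemble into the single statement immediately, by the constant-multiple reduction of the first paragraph.
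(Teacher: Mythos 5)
Your reduction is sound and, in places, more careful than the paper's own argument: the four densities are indeed positive constants (your values $(1\pm b)^{3/2}$ for $\sigma_{\max},\sigma_{\min}$ disagree with the paper's $(1\pm b)^{3}$, but only constancy and positivity are ever used), your multiplier agrees with the paper's $\lambda=-a(1-b^2)^{3/2}/(1+b\sin c)$ up to sign convention, the exactness argument for $\theta\equiv c$ is correct (and even gives a global, not just local, conclusion), and for $\theta=t+c$ the identity $\oint\beta=b\sin c\int_0^{2\pi}R\,dt$ is exactly right. The genuine gap is the first harmonics, which you explicitly leave unresolved; and the repair you sketch --- restricting admissible variations to those orthogonal to $\cos t,\sin t$, i.e.\ to competitors ``centred at the origin'' --- is not a legitimate move, because the isoperimetric problem quantifies over \emph{all} nearby closed curves of the prescribed Finsler length. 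Worse, your own second-variation formula already decides the question you defer: on $w=\cos t$ or $w=\sin t$ it gives
\begin{equation*}
\delta^2\mathcal{J}
=\sigma_\bullet\pi\left(1-\frac{1}{1+b\sin c}\right)
=\sigma_\bullet\pi\,\frac{b\sin c}{1+b\sin c},
\end{equation*}
whose sign is the sign of $\sin c$. So for $\sin c<0$ nothing is delicate (strict negativity, and your proof closes), and for $\sin c=0$ one has $\beta=b\,dr$, which is exact, so the problem is literally Euclidean; but for $\sin c>0$ the gap cannot be closed by any argument, because the statement itself fails there: translating the disc of radius $a$ to centre $(\epsilon,0)$ leaves Euclidean length and area unchanged while $\oint\beta=b\sin c\int_0^{2\pi}\sqrt{a^2-\epsilon^2\sin^2 t}\,dt$ strictly decreases, so the displaced circle encloses the same area with strictly smaller Finsler length, and a slight dilation of it then beats $\gamma_0$ at equal length. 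Your ``delicate'' directions are a genuine instability, not a technicality.

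For comparison, the paper never meets this difficulty only because it suppresses it: from Proposition 3.2 onward it substitutes $\theta=t+c$ into the Lagrangian as a function of the parameter $t$ alone, so that $g_{x^i}=0$ and every term containing $\theta_{x^i}$ disappears; Hestenes' conditions are then verified for a functional whose one-form coefficients ride along with the parametrization rather than with the position, which is not the length functional of the metric in the statement. With the honest $x$-dependence, the Weierstrass and Legendre conditions do survive (the $\beta$-terms are linear in the velocities and cancel), and normality survives, but the Jacobi equation in the radial gauge becomes $\ddot w+(1+b\sin c)\,w=\mathrm{const}$ rather than the paper's $\ddot\omega+\omega-\mu a^{2}/\lambda=0$, and it produces conjugate points inside $(0,2\pi)$ exactly when $\sin c>0$ --- the same phenomenon your first harmonics detect. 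So your framework is the right one, but to become a proof it must add the hypothesis $\sin c\le 0$ (handling $\sin c=0$ by exactness); as stated, in the case $\sin c>0$, what your method actually produces is a counterexample to the theorem, and the paper's proof of that case is itself erroneous.
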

If we choose $\theta=0$, then $F=\sqrt{(dx^1)^2+(dx^2)^2}+ bdx^1$ with $0<b<1$. In this case the Finsler metric $F$ defined on  $\mathbb{R}^2$ gives a Minkowski space as $F$ is independent of the point on $\mathbb{R}^2$ and therefore, $(\mathbb{R}^2, F)$ has zero flag curvature. For this Randers-Minkowski metric, we obtain:
\begin{corollary}\label{cor1}
Let $(\mathbb{R}^2, F)$ be a Randers plane, where $F=\sqrt{(dx^1)^2+(dx^2)^2}+ bdx^1$, $0<b<1$ . Then $\gamma_0(t)=(a\cos t,a\sin t)$ is a solution of the Minkowski isoperimetric problem.
\end{corollary}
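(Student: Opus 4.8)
The plan is to obtain Corollary~\ref{cor1} as the direct specialization $\theta\equiv 0$ of Theorem~\ref{thm1}. First I would observe that the metric $F=\sqrt{(dx^1)^2+(dx^2)^2}+b\,dx^1$ is precisely the member of the family treated in Theorem~\ref{thm1} that corresponds to the \emph{constant} choice $\theta=0$. A constant function satisfies the hypothesis ``$\theta=c$'' of the theorem (with $c=0$) on \emph{every} circle $\gamma_0(t)=(a\cos t,a\sin t)$, so the first alternative of Theorem~\ref{thm1} applies without further restriction on $a$.

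Before invoking the theorem I would record the two structural features that justify the phrase ``Minkowski isoperimetric problem''. Because the coefficients of $F$ do not depend on the base point $x=(x^1,x^2)$, the pair $(\mathbb{R}^2,F)$ is a Minkowski plane; consequently the ambient isoperimetric problem is exactly the isoperimetric problem for a translation-invariant Minkowski norm, and $(\mathbb{R}^2,F)$ carries zero flag curvature, as already noted in the paragraph preceding the statement.

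With these observations in place, the proof reduces to a single application of Theorem~\ref{thm1}: taking $\theta=0=c$ on each circle $\gamma_0$, the theorem asserts that $\gamma_0$ is a solution of the isoperimetric problem with respect to each of the Busemann--Hausdorff, Holmes--Thompson, Maximum, and Minimum volume forms. In particular $\gamma_0$ solves the problem for the volume form defining the Minkowski isoperimetric problem, which is the assertion of the corollary.

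The only point that warrants a line of justification — rather than a genuine obstacle — is the identification of the functionals. One should check that when $\theta$ is constant the Finsler length functional $\int F(\dot\gamma)\,dt$ and each of the four area functionals appearing in Theorem~\ref{thm1} collapse to their point-independent Minkowski counterparts, so that a critical curve of the general variational problem is genuinely a critical curve of the Minkowski isoperimetric problem. Since $F$ is translation-invariant this reduction is immediate, and the corollary follows at once.
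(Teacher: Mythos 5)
Your proposal is correct and follows essentially the same route as the paper: the paper also obtains Corollary~\ref{cor1} by specializing to $\theta\equiv 0$, checking via \ref{eq1.22} that $\lambda<0$ remains a negative constant, and then rerunning the machinery of Theorem~\ref{thm1} (which the paper phrases as ``following the same steps'' since its written proof of the theorem treated $\theta=t+c$, deferring constant $\theta$ to the corollaries). Your added remark identifying the point-independent $F$ as a Minkowski norm, so that the four area functionals reduce to constant multiples of the Lebesgue area and the problem is genuinely the Minkowski isoperimetric problem, matches the paper's discussion preceding the corollary.
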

Considering $\theta \equiv constant$ in Theorem \ref{thm1}, we obtain:
\begin{corollary}\label{cor2}
Consider the Finsler metric $F=\sqrt{(dx^1)^2+(dx^2)^2}+d\tau$, where $\tau$ is a smooth real valued function on $\mathbb{R}^2$ with $\|d\tau\|$ constant and $\|d\tau\|<1$ ($\|.\|$ denotes the Euclidean norm on $\mathbb{R}^2$). Then $\gamma_0(t)=(a\cos t,a\sin t)$ is a solution of the isoperimetric problem.
\end{corollary}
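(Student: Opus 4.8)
The plan is to deduce this corollary from Theorem \ref{thm1} by showing that its hypotheses force the angle function $\theta$ to be constant, placing us in the first case ($\theta=c$) of the theorem. First I would recast $F=\alpha+d\tau$ into the normal form used in Theorem \ref{thm1}. Writing $d\tau=\tau_{x^1}\,dx^1+\tau_{x^2}\,dx^2$ and setting $b:=\|d\tau\|$, the hypothesis that $b$ is a constant in $(0,1)$ lets me define an angle $\theta$ by $\tau_{x^1}=b\cos\theta$ and $\tau_{x^2}=b\sin\theta$, so that $\beta=d\tau=b(\cos\theta\,dx^1+\sin\theta\,dx^2)$ is exactly the one-form of Theorem \ref{thm1} with norm $b<1$. (The degenerate value $b=0$ gives $\beta=0$ and the Euclidean plane, where the classical isoperimetric theorem already singles out the circle.)

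The key step is to prove that $\theta$ is globally constant. I would differentiate the eikonal identity $(\tau_{x^1})^2+(\tau_{x^2})^2=b^2$ to obtain $\mathrm{Hess}(\tau)\,\nabla\tau=0$; hence $\nabla\tau$ lies in the kernel of its own Hessian, so it is parallel along its integral curves, which are therefore straight lines each carrying a fixed value of $\nabla\tau$. Because $\tau$ is smooth on all of $\mathbb{R}^2$, two such characteristic lines cannot meet, since an intersection point would force $\nabla\tau$ to point in two different directions there, while two non-parallel lines in the plane always intersect. Consequently all characteristics are parallel, $\nabla\tau$ has a single fixed direction, and $\theta\equiv c$ for some constant $c$.

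Once $\theta\equiv c$ is established, in particular $\theta=c$ holds along every circle $\gamma_0(t)=(a\cos t,a\sin t)$, so the first hypothesis of Theorem \ref{thm1} is met and the conclusion follows immediately for each of the four volume forms. I expect this eikonal rigidity to be the only genuine obstacle: all the variational content, namely the first- and second-variation computations that pick out $\gamma_0$ for the Busemann--Hausdorff, Holmes--Thompson, maximum, and minimum volume forms, is already supplied by Theorem \ref{thm1}. An alternative route that sidesteps even the smoothness argument is to observe that once $\theta\equiv c$ we have $d\tau=b(\cos c\,dx^1+\sin c\,dx^2)$, which is the one-form $b\,dx^1$ of Corollary \ref{cor1} rotated by the angle $c$; since the Euclidean rotation fixes the origin-centred circle and preserves $\alpha$ together with each volume form, Corollary \ref{cor1} transports directly to yield the result.
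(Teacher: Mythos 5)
Your proposal follows the same overall route as the paper: both reduce Corollary \ref{cor2} to Theorem \ref{thm1} by showing that constancy of $\|d\tau\|$ forces $d\tau$ to have constant coefficients, so that $\theta\equiv c$. The difference lies in how that rigidity is established. The paper writes $p=\tau_{x^1}$, $q=\tau_{x^2}$, observes $p^2+q^2=v$, and simply cites Charpit's method (Sneddon's textbook) to conclude that $p$ and $q$ are constants. You instead prove the rigidity directly: differentiating the eikonal identity gives $\mathrm{Hess}(\tau)\,\nabla\tau=0$, so the integral curves of $\nabla\tau$ are straight lines along which $\nabla\tau$ is constant, and since non-parallel lines in $\mathbb{R}^2$ must cross while smoothness forbids two distinct gradient values at a crossing, all characteristics are parallel. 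This is exactly the geometric content behind Charpit's characteristic equations, but your version is self-contained and in fact more complete: Charpit's method by itself produces a family of solutions of the PDE, and passing from that to the statement that \emph{every} globally smooth solution is affine requires precisely the no-crossing argument you supply. One small point to patch: parallel characteristics could a priori carry antiparallel gradients ($\theta$ on one line, $\theta+\pi$ on another), which your phrase ``a single fixed direction'' silently excludes; one more line closes this, e.g.\ the function $x\mapsto\langle\nabla\tau(x),u\rangle$ (with $u$ the common direction of the characteristics) is continuous with values in $\{\pm b\}$, hence constant on the connected plane. With that, $\theta\equiv c$ and the application of Theorem \ref{thm1} (or, equivalently, your rotation of Corollary \ref{cor1}, which is also valid since Euclidean rotations preserve $\alpha$, the volume forms, and origin-centred circles) goes through; your separate treatment of the degenerate case $b=0$, which the paper glosses over, is likewise correct.
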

\begin{remark}
 As per Busemann \cite{HB}, circle may not be a solution of the isoperimetric problem in Minkowski plane. However, for the Minkowski plane discussed above, indeed the circle $\gamma_0$ turns out to be a solution of the isoperimetric problem.
\end{remark}
\section{Preliminaries}
Let $ M $ be an $n$-dimensional smooth manifold. $T_{x}M$ denotes the tangent space of $M$
 at $x$. The tangent bundle of $ M $ is the disjoint union of tangent spaces, $ TM:= \sqcup _{x \in M}T_xM $. We denote the elements of $TM$ by $(x,y)$ where $y\in T_{x}M $ and $TM_0:=TM \setminus\left\lbrace 0\right\rbrace $, the slit tangent bundle of $M$.
 \begin{definition}
 \cite{SSZ} A Finsler metric on $M$ is a function $F:TM \to [0,\infty)$ satisfying the following conditions:
 \begin{itemize}
 \item[(i)] $F$ is smooth on $TM_{0}$,
  \item[(ii)] $F$ is positively 1-homogeneous on the fibers of the tangent bundle $TM$,
   \item[(iii)] The Hessian of $\frac{F^2}{2}$ with element $g_{ij}=\frac{1}{2}\frac{\partial ^2F^2}{\partial y^i \partial y^j}$ is positive definite on $TM_0$.
 \end{itemize}
 The pair $(M,F)$ is called a Finsler space and $g_{ij}$ is called the fundamental tensor.
 \end{definition}
 It is known that there is a canonical volume form in a Riemannian manifold $(M^n,\alpha)$, $\alpha=\sqrt{a_{ij}dx^idx^j}$, given by $dV=\sqrt{\det(a_{ij})}dx$, whereas in Finsler manifold there are several volume forms some of them are defined as follows: 
 \begin{definition}
  For an $n$-dimensional Finsler manifold $(M^n,F)$, the Busemann-Hausdorff volume form is defined as $dV_{BH}=\sigma_{BH}(x)dx$, where
     \begin{equation}\label{1.01}
     \sigma_{BH}(x)=\frac{vol(B^n(1))}{vol\left\lbrace (y^i)\in T_xM : F(x,y)< 1 \right\rbrace }.
     \end{equation}
   The Holmes-Thompson volume form is defined as $dV_{HT}=\sigma_{HT}(x)~dx$, where 
     \begin{equation}\label{1.02}
     \sigma_{HT}(x)=\frac{1}{vol(B^n(1))}\int_{F(x,y)<1}\det(g_{ij}(x,y))dy.
     \end{equation}
     Here, $B^n(1)$ is the Euclidean unit ball in $\mathbb{R}^n$ and $vol$ is the Euclidean volume.\\
     \end{definition}
     \begin{definition}
    The maximum volume form of a Finsler metric $F$ with fundamental metric tensor $g_{ij}$ is defined as
 \begin{equation}
 dV_{\max}=\sigma_{\max}(x)dx,
 \end{equation}
 where $\sigma_{\max}(x)=\max\limits_{y \in I_x}\sqrt{\det(g_{ij}(x,y))}$ and, $I_x=\left\lbrace y\in T_xM  |  F(x,y)=1\right\rbrace$, is the indicatrix at the point $x$ of the Finsler manifold. \\
  The minimum volume form of a Finsler metric $F$ with the fundamental metric tensor $g_{ij}$ is defined as
  \begin{equation}
  dV_{\min}=\sigma_{\min}(x)dx,
  \end{equation}
  where $\sigma_{\min}(x)=\min\limits_{y\in I_x}\sqrt{\det(g_{ij}(x,y))}$.
 \end{definition} 
 \begin{proposition}\label{prop1}\cite{XZ}
  Let $F=\alpha\phi(s)$, $s=\beta/\alpha$ be an $(\alpha,\beta)$-metric on an $n$-dimensional manifold $M$ and $T=\phi(\phi-s\phi')^{n-2}\left[ \phi-s\phi'+(b^2-s^2)\phi^{''}\right] $. Then the volume form $dV$ of the Finsler metric $F$ is given by 
  $$dV=f(b)dV_{\alpha},$$
  where
  \begin{equation}\label{eqn3.1}
  f(b) := \begin{cases} \frac{\int\limits_{0}^{\pi}\sin^{n-2}(t)dt}{\int\limits_{0}^{\pi}\frac{\sin^{n-2}(t)}{[\phi(b\cos (t))]^n}dt}, &\mbox{if } dV=dV_{BH}, \\
  \frac{\int\limits_{0}^{\pi}\sin^{n-2}(t)T(b\cos (t))dt}{\int\limits_{0}^{\pi}\sin^{n-2}(t)dt}, & \mbox{if } dV=dV_{HT}, \end{cases}
  \end{equation}
 and $dV_{\alpha}=\sqrt{\det(a_{ij})}dx$ denotes the Riemannian volume form $\alpha$.
  \end{proposition}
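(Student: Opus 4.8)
The plan is to reduce the statement to a computation in a single fibre $T_xM$ equipped with a well-adapted linear frame, and then to evaluate two Euclidean integrals over the $F$-unit ball. Since $dV=\sigma(x)\,dx$ and $dV_\alpha=\sqrt{\det(a_{ij})}\,dx$ are both volume forms, the ratio $f(b)=\sigma(x)/\sqrt{\det(a_{ij})}$ is a well-defined scalar, independent of the chosen fibre coordinates; I would therefore fix $x$ and choose a basis of $T_xM$ that is orthonormal for $\alpha$ and in which $\beta$ is aligned with the first axis. In such a frame $\alpha(y)=|y|$ is the Euclidean norm, $\det(a_{ij})=1$, and $\beta(y)=b\,y^1$, so $s=\beta/\alpha=b\,y^1/|y|$, and it suffices to show that $\sigma(x)$ equals the asserted expression.

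For the Busemann--Hausdorff form I would pass to Euclidean polar coordinates $y=r\omega$ with $r=|y|$ and $\omega\in S^{n-1}$, parametrizing the sphere so that $\omega^1=\cos t$; the spherical measure then factors as $d\omega=\sin^{n-2}(t)\,dt\,d\Omega_{n-2}$. The inequality $F(x,y)<1$ reads $r\,\phi(b\cos t)<1$, and integrating $r^{n-1}\,dr$ over $0\le r<1/\phi(b\cos t)$ contributes the factor $1/\big(n[\phi(b\cos t)]^n\big)$. Dividing $vol(B^n(1))$, written in the same coordinates with $\phi\equiv1$, by the resulting volume of $\{F<1\}$ gives exactly the quotient claimed for $f(b)$ in the $dV_{BH}$ case, the common factor $vol(S^{n-2})/n$ cancelling.

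For the Holmes--Thompson form the essential input is the determinant identity for an $(\alpha,\beta)$-metric,
$$\det(g_{ij})=\phi^{n+1}(\phi-s\phi')^{n-2}\big[\phi-s\phi'+(b^2-s^2)\phi''\big]\det(a_{ij})=\phi^n\,T\,\det(a_{ij}),$$
which I would derive by writing the fundamental tensor in the standard form $g_{ij}=\rho\,a_{ij}+\rho_0\,b_ib_j+\rho_1(b_i\ell_j+b_j\ell_i)+\rho_2\,\ell_i\ell_j$, with $\ell_i=\partial\alpha/\partial y^i$ and coefficients $\rho,\rho_0,\rho_1,\rho_2$ functions of $s$, and then computing the determinant of this scalar-plus-low-rank matrix by the matrix determinant lemma, the correction being supported on the two directions $b_i$ and $\ell_i$. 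Because $g_{ij}$ is $0$-homogeneous in $y$, the integrand $\det(g_{ij})$ depends only on the direction $\omega$, hence only on $t$; the radial integration once more yields $1/\big(n[\phi(b\cos t)]^n\big)$, and the factor $\phi^n$ inside $\det(g_{ij})$ cancels it precisely, leaving $T(b\cos t)/n$. Normalizing by $vol(B^n(1))$ and cancelling $vol(S^{n-2})/n$ produces the stated quotient in the $dV_{HT}$ case.

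I expect the determinant identity to be the main obstacle. Obtaining the compact closed form for $\det(g_{ij})$ requires differentiating $F^2=\alpha^2\phi(s)^2$ twice in $y$ and carefully reducing the resulting rank-two perturbation of $\rho\,a_{ij}$; it is precisely this computation that produces the expression $T$. Once the identity is available, the remaining ingredients—the adapted frame, the separation into radial and spherical integrals, and the cancellation of $\phi^n$—are routine, and the two volume forms differ only in whether the spherical integrand is $1/[\phi(b\cos t)]^n$ or $T(b\cos t)$.
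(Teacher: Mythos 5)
The paper does not prove this proposition at all---it is quoted verbatim from the reference \cite{XZ} (Cheng--Shen), so there is no in-paper argument to compare against. Your proposal is correct and is essentially the standard proof from that source: the reduction to an $\alpha$-orthonormal frame with $\beta$ along the first axis is legitimate because $\sigma(x)/\sqrt{\det(a_{ij})}$ is invariant under linear changes of fibre coordinates, the radial integration giving $1/\bigl(n[\phi(b\cos t)]^n\bigr)$ is right in both cases, and the determinant identity $\det(g_{ij})=\phi^{n+1}(\phi-s\phi')^{n-2}\bigl[\phi-s\phi'+(b^2-s^2)\phi''\bigr]\det(a_{ij})=\phi^nT\det(a_{ij})$---the one genuinely nontrivial ingredient, which you correctly flag and correctly propose to obtain from the rank-two-perturbation structure of $g_{ij}$---is exactly what makes the $\phi^n$ cancel in the Holmes--Thompson integrand.
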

    \begin{lemma}\label{lem1.1}\cite{SSZ}
    The Busemann-Hausdorff volume form of a Randers metric $F=\alpha+\beta$ is given by,
         \begin{equation}
         dV_{BH}=(1-\|\beta\|^2_{\alpha})^{\frac{n+1}{2}}dV_{\alpha}
         \end{equation}
          and the Holmes-Thompson volume form is given by,
         
         \begin{equation}
         dV_{HT}=dV_{\alpha}.
         \end{equation}
    \end{lemma}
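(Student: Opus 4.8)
The plan is to treat the Randers metric $F=\alpha+\beta$ as an $(\alpha,\beta)$-metric $F=\alpha\phi(s)$ with $s=\beta/\alpha$ and $\phi(s)=1+s$, so that Proposition \ref{prop1} applies directly with $b=\|\beta\|_\alpha$. First I would record the elementary derivatives $\phi'=1$ and $\phi''=0$, whence $\phi-s\phi'=1$ and the auxiliary quantity collapses to
$$T=\phi(\phi-s\phi')^{n-2}\big[\phi-s\phi'+(b^2-s^2)\phi''\big]=1+s.$$
Both volume forms then reduce to evaluating the integrals in \eqref{eqn3.1} with these explicit data, so the whole lemma becomes two definite-integral computations.

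For the Holmes--Thompson form the computation is immediate. Substituting $T(b\cos t)=1+b\cos t$ gives
$$f(b)=\frac{\int_0^\pi \sin^{n-2}(t)(1+b\cos t)\,dt}{\int_0^\pi \sin^{n-2}(t)\,dt}.$$
The term carrying $\cos t$ contributes $b\int_0^\pi \sin^{n-2}(t)\cos t\,dt$, which vanishes since the substitution $u=\sin t$ sends both endpoints to $0$. Hence $f(b)=1$ and $dV_{HT}=dV_\alpha$.

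The Busemann--Hausdorff form is where the real work lies. Here $\phi(b\cos t)=1+b\cos t$ and
$$f(b)=\frac{\int_0^\pi \sin^{n-2}(t)\,dt}{\int_0^\pi \dfrac{\sin^{n-2}(t)}{(1+b\cos t)^{n}}\,dt},$$
so the task is to evaluate the denominator and show the ratio equals $(1-b^2)^{(n+1)/2}$. I would introduce the angle substitution $\cos\psi=\dfrac{\cos t+b}{1+b\cos t}$, equivalently $\cos t=\dfrac{\cos\psi-b}{1-b\cos\psi}$, which fixes the endpoints $0,\pi$ and yields the clean identities
$$1+b\cos t=\frac{1-b^2}{1-b\cos\psi},\qquad \sin t=\frac{\sqrt{1-b^2}\,\sin\psi}{1-b\cos\psi},\qquad dt=\frac{1+b\cos t}{\sqrt{1-b^2}}\,d\psi.$$
After substituting, the powers of $(1-b\cos\psi)$ combine to a single factor and the denominator becomes $(1-b^2)^{-(n+1)/2}\int_0^\pi \sin^{n-2}(\psi)(1-b\cos\psi)\,d\psi$; the odd part again vanishes, leaving $(1-b^2)^{-(n+1)/2}\int_0^\pi \sin^{n-2}(\psi)\,d\psi$. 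Dividing gives $f(b)=(1-b^2)^{(n+1)/2}$, i.e. $dV_{BH}=(1-\|\beta\|_\alpha^2)^{(n+1)/2}dV_\alpha$.

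The main obstacle is locating the correct substitution for the Busemann--Hausdorff integral; once the relativistic-type change of variable $\cos\psi=(\cos t+b)/(1+b\cos t)$ is in hand, everything reduces to the same vanishing-odd-integral observation used in the Holmes--Thompson case. As a conceptual cross-check, one may instead compute $\mathrm{vol}\{y:F(x,y)<1\}$ directly: in $\alpha$-orthonormal coordinates the region $|y|+\langle b,y\rangle<1$ is an ellipsoid, and completing the square together with the matrix determinant lemma applied to $I-bb^{\top}$ (giving $\det(I-bb^{\top})=1-b^2$) reproduces the factor $(1-b^2)^{(n+1)/2}$ with no trigonometric integral at all.
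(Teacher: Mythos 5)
Your derivation is correct, but note that the paper contains no proof of this lemma at all --- it is quoted from \cite{SSZ} --- so there is no internal proof to match; what you have produced is an actual argument, and a sound one. Your route is to specialize Proposition \ref{prop1} (the volume formula for general $(\alpha,\beta)$-metrics, itself quoted from \cite{XZ}) to $\phi(s)=1+s$: the computations $\phi-s\phi'=1$ and $T=1+s$ are right; the Holmes--Thompson ratio equals $1$ because $\int_0^{\pi}\sin^{n-2}t\,\cos t\,dt=0$; and in the Busemann--Hausdorff case your substitution $\cos\psi=(\cos t+b)/(1+b\cos t)$ gives exactly the exponent count $(n-2)/2-n+1/2=-(n+1)/2$ for the factor $(1-b^2)$ and $-(n-2)+n-1=1$ for the factor $(1-b\cos\psi)$, after which the odd term drops and $f(b)=(1-b^2)^{(n+1)/2}$ as claimed. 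Two remarks on the comparison. First, your concluding ``cross-check'' (in $\alpha$-orthonormal coordinates the unit ball $\{y:\,|y|+\langle b,y\rangle<1\}$ is an ellipsoid, $\det(I-bb^{\top})=1-|b|^2$, and its volume is $\mathrm{vol}(B^n(1))\,(1-|b|^2)^{-(n+1)/2}$) is essentially the classical proof of the Busemann--Hausdorff half in the cited reference, so in effect you have supplied both the standard argument and an alternative one. Second, the trade-off between the two routes: passing through Proposition \ref{prop1} treats both volume forms uniformly as one-dimensional trigonometric integrals, but rests on the heavier general $(\alpha,\beta)$-machinery; the ellipsoid computation is elementary and self-contained for $dV_{BH}$, but to get $dV_{HT}$ that way one would still need the Randers identity $\det(g_{ij})=(F/\alpha)^{n+1}\det(a_{ij})$ and an integration of that density over the ellipsoid, which is precisely the work that Proposition \ref{prop1} packages for you.
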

  \begin{lemma}\cite{BYW}
  The maximum volume form of a Randers metric $F=\alpha+\beta$ is given by
       \begin{equation}
       dV_{\max}=(1+\|\beta\|_{\alpha})^{n+1}dV_{\alpha}
       \end{equation} and the minimum volume form is given by
       
       \begin{equation}
       dV_{\min}=(1-\|\beta\|_{\alpha})^{n+1}dV_{\alpha}.
       \end{equation}
  \end{lemma}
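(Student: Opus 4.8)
The plan is to obtain $\sigma_{\max}$ and $\sigma_{\min}$ by extremizing $\sqrt{\det(g_{ij}(x,y))}$ over the indicatrix $I_x$, reducing this to a one-variable problem through the explicit fundamental tensor of a Randers metric and a closed form for its determinant. Setting $y_i=a_{ij}y^j$, $\ell_i=y_i/\alpha$, $\beta=b_iy^i$ and $s=\beta/\alpha$, a double $y$-differentiation of $\tfrac12F^2$ gives
$$g_{ij}=\frac{F}{\alpha}\bigl(a_{ij}-\ell_i\ell_j\bigr)+(\ell_i+b_i)(\ell_j+b_j),$$
which exhibits $g_{ij}$ as the invertible matrix $\tfrac{F}{\alpha}a_{ij}$ modified by a symmetric rank-two term assembled from the covectors $\ell=(\ell_i)$ and $b=(b_i)$.

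I expect the determinant evaluation to be the main obstacle. The idea is to apply the matrix determinant lemma (in its rank-two form) to this update and use the $\alpha$-contractions $a^{ij}\ell_i\ell_j=1$, $a^{ij}\ell_ib_j=s$ and $a^{ij}b_ib_j=b^2$; the associated $2\times2$ factor then has determinant exactly $1+s$, so that the cross terms collapse and one arrives at the classical Randers identity
$$\det(g_{ij})=\Bigl(\frac{F}{\alpha}\Bigr)^{n+1}\det(a_{ij}).$$
The delicate point is to organize the two rank-one corrections so that this clean cancellation appears, rather than expanding the full $n\times n$ determinant directly.

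Finally I would restrict to $I_x=\{\,y:F(x,y)=1\,\}$. There $F=1$ forces $F/\alpha=1/\alpha=1+s$, so along $I_x$ the density $\sqrt{\det(g_{ij})}$ is a strictly increasing function of the single quantity $s=\beta/\alpha$. Since $s$ is $0$-homogeneous it depends only on direction, and Cauchy--Schwarz gives $|s|\le b=\|\beta\|_\alpha$, the values $s=\pm b$ being attained precisely when $y$ is parallel or anti-parallel to the $\alpha$-dual vector $\beta^\sharp$ (here $b<1$ guarantees that $I_x$ is a genuine strongly convex hypersurface carrying both directions). Hence the maximum and minimum of $\sqrt{\det(g_{ij})}$ over $I_x$ occur at $s=+b$ and $s=-b$ respectively; substituting these extreme values and using $dV_\alpha=\sqrt{\det(a_{ij})}\,dx$ yields $\sigma_{\max}(x)$ and $\sigma_{\min}(x)$, and thus the stated forms $dV_{\max}$ and $dV_{\min}$.
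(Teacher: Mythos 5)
Your reduction is sound almost to the end: the formula $g_{ij}=\frac{F}{\alpha}(a_{ij}-\ell_i\ell_j)+(\ell_i+b_i)(\ell_j+b_j)$ is correct, the rank-two matrix determinant lemma does work exactly as you say (with the contractions $a^{ij}\ell_i\ell_j=1$, $a^{ij}\ell_ib_j=s$, $a^{ij}b_ib_j=b^2$ the $2\times2$ factor is indeed $1+s$), and the resulting identity $\det(g_{ij})=\left(\frac{F}{\alpha}\right)^{n+1}\det(a_{ij})$ together with $|s|\le b$ (Cauchy--Schwarz, equality at $y\parallel\pm\beta^{\sharp}$) correctly locates the extrema at $s=\pm b$. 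The gap is in your final sentence. Taking square roots,
\begin{equation*}
\sqrt{\det(g_{ij})}=\Bigl(\tfrac{F}{\alpha}\Bigr)^{\frac{n+1}{2}}\sqrt{\det(a_{ij})},
\end{equation*}
so substituting $s=\pm b$ gives $\sigma_{\max}=(1+b)^{\frac{n+1}{2}}\sqrt{\det(a_{ij})}$ and $\sigma_{\min}=(1-b)^{\frac{n+1}{2}}\sqrt{\det(a_{ij})}$, i.e.\ exponent $(n+1)/2$, not the exponent $n+1$ claimed in the Lemma. Your computation therefore does not ``yield the stated forms''; it yields $dV_{\max}=(1+b)^{\frac{n+1}{2}}dV_{\alpha}$ and $dV_{\min}=(1-b)^{\frac{n+1}{2}}dV_{\alpha}$. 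A quick sanity check in $n=2$ with $\alpha$ Euclidean, $\beta=b\,dx^1$, $y=(1,0)$: there $g=\mathrm{diag}\left((1+b)^2,\,1+b\right)$, so $\sqrt{\det g}=(1+b)^{3/2}$, whereas the Lemma asserts $\sigma_{\max}=(1+b)^3$.

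What your (otherwise valid) argument actually exposes is that the displayed Lemma is inconsistent with the paper's own Definition of $\sigma_{\max}$ and $\sigma_{\min}$, which extremizes $\sqrt{\det(g_{ij})}$ over the indicatrix: the exponent $n+1$ could only arise from extremizing $\det(g_{ij})$ itself. Since the paper gives no proof here (the Lemma is quoted from [BYW]), there is no argument of the paper to compare yours against; either the paper's definition or its quoted formula misreports that reference. The discrepancy happens to be harmless for the paper's main theorem, because only the fact that $A_{\max}$ and $A_{\min}$ are positive constant multiples of the Euclidean area is ever used. But as a proof of the statement as written, your argument does not close: you must either conclude with the $(1\pm b)^{(n+1)/2}$ formulas and flag the mismatch, or show you are working with a definition based on $\det(g_{ij})$ rather than its square root.
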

 Now we discuss about the solvability of the isoperimetric problem. First we recall some required definitions. The more details can be found in \cite{MH, LZ1}.
\begin{definition}
 A continuous function $ \gamma : \left[t_0,t_1 \right]  \to M^2 $ is called an \textit{admissible curve} on $M$, if there exists a partition $ P := \left\lbrace t_0=a_{0}<a_{1}<...<a_{k}=t_1\right\rbrace$ of the interval $[t_0,t_1]$ such that the curve $\alpha $ has continuous derivative in each subinterval $\left[ a_{i},a_{i+1} \right]$,~ for all $i = 0,1,...,k-1 $.
\end{definition}
 Consider the set of all admissible curves joining two fixed points for which the definite integral $L = \int\limits_{t_0}^{t_1}g(x^{1},x^{2},\dot{x}^{1},\dot{x}^{2})dt$ takes a constant value $l$. Then the isoperimetric problem is about finding an admissible curve $\gamma_0(t)=(x^1(t),x^2(t))$ for which the integral $A=\int\limits_{t_0}^{t_1}f(x^{1},x^{2},\dot{x}^{1},\dot{x}^{2})dt$ has its \textit{extremum}.\\
To solve the isoperimetric problem we first consider the Lagrange functional:
 \begin{equation}\label{eq1.01}
 J(x)=A+\lambda L=\int_{t_0}^{t_1}h(x^1,x^2, \dot{x}^1,\dot{x}^2,\lambda)dt,
 \end{equation}
where, 
\begin{equation}\label{eq1.02}
h=f+\lambda g.
\end{equation}
\begin{definition}\label{def1.001}
 An admissible curve $\gamma_0$ is said be an \textit{isoperimetric extremal}, if it satisfies the following Euler-Lagrange equations:
\begin{equation}\label{eq1.0.1}
\frac{\partial h}{\partial x^i}- \frac{d}{dt}\left(\frac{\partial h}{\partial \dot{x}^i} \right) =0, ~ i=1,2.
\end{equation}
And an admissible curve $\gamma_0$ satisfying the Euler-Lagrange equations is said to be \textit{normal} if $P_1$ and $P_2$ are non-zero functions, where $P_i=g_{x^i}-\frac{d}{dt}g_{\dot{x}^i}$, for $i=1,2$.
\end{definition}
The \textit{increment} of the functional $J$ is defined by 
\begin{equation}\label{1.24}
\bigtriangleup J=\int\limits_{t_{0}}^{t_{1}}E(x^{1},x^{2},\dot{x}^1,\dot{x}^{2},u^{1},u^{2})dt,
\end{equation}
where
\begin{equation}\label{1.25}
E(x^{1},x^{2},\dot{x}^1,\dot{x}^{2},u^{1},u^{2})=h(x^{1},x^{2},u^{1},u^{2})-h(x^{1},x^{2},\dot{x}^1,\dot{x}^{2})-\sum\limits_{j=1}^{2}(u^{j}-\dot{x}^{j})\frac{\partial h(x^{1},x^{2},\dot{x}^1,\dot{x}^{2})}{\partial \dot{x}^j}.
\end{equation}
The function $E$ is called the \textit{Weierstrass function} of $J$.\\
The \textit{second variation} of $J$ along the curve $\gamma_0$ is defined as
\begin{equation}
J''(\gamma_0,y)=\int\limits_{t_0}^{t_1}2\omega(t,y(t),\dot{y}(t))dt,
\end{equation}
where, $\omega =\sum\limits_{i,j=1}^{2} h_{x^ix^j}y^iy^j+2h_{x^i\dot{x}^j}y^i\dot{y}^j+h_{\dot{x}^i\dot{x}^j}\dot{y}^i\dot{y}^j$.
\begin{definition}
A point $t=c$ on $t_0 <t \le t_1$ is said to be a \textit{conjugate point} to $t=t_0$ on $\gamma_0$ if, there exist a solution $y_i(t)$ of the Jacobi equations
\begin{equation*}
\omega_{y^i}-\frac{d\omega_{\dot{y}^i}}{dt}=0, \quad i=1,2,
\end{equation*}
such that $\int\limits_{t_0}^{t_1}\left( (g)_{x^i}y^i + (g)_{\dot{x}^i}\dot{y}^i\right) dt = 0$, holds.
\end{definition}
A sufficiency theorem for a strong maximum of the isoperimetric problem  proved by Hestenes in \cite{MH} can be stated as:

\begin{theorem}\label{thm1.01}
Let $\gamma_0$ be an admissible curve. Suppose there exist $\lambda_0$ such that, relative to the function
$J(x) = \int\limits_{t_0}^{t_1} h(x^1, x^2, \dot{x}^1,\dot{x}^2,\lambda)dt$ the following holds:\\
(1) $\gamma_0$ is isoperimetric extremal, i.e., it satisfies the Euler-Lagrange equations \ref{eq1.0.1},\\
(2) $\gamma_0$ is normal,\\
(3) the Weierstrass function $E(x, \dot{x} , u) < 0$ for every admissible set $(x, u)$ with $u \ne k\dot{x} (k > 0)$,\\
(4) $J′′(\gamma_0, y)< 0$, for every admissible variations $y^i(t)\ne \rho(t)\dot{x}^i(t), (t_0 \le t \le t_1)$, vanishing at $t = t_0$ and $t = t_1$ and satisfying with $\gamma_0$ the equations $\int\limits_{t_0}^{t_1}(g_{x^i}y^i+g_{\dot{x}^i}\dot{y}^i)dt= 0 \quad i = 1, 2$, \\
(5) along $\gamma_0$ the inequality $\sum\limits_{i,j=1}^{2}h_{\dot{x}^i\dot{x}^j}y^iy^j<0$, holds for all $y\ne k\dot{\gamma_0}(t)$.\\
 Then $\gamma_0$ is a proper strong maximum of the isoperimetric problem.
\end{theorem}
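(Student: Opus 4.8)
The plan is to prove the sufficiency theorem by the classical field-theoretic (Weierstrass) method, adapted to the isoperimetric constraint through the fixed multiplier $\lambda_0$. The first step is a reduction. Fix $\lambda=\lambda_0$ and compare $\gamma_0$ with an arbitrary admissible curve $\gamma$ joining the same endpoints and satisfying the side condition $L(\gamma)=l=L(\gamma_0)$. Since the multiplier term cancels on the constraint surface, $\lambda_0 L(\gamma)=\lambda_0 L(\gamma_0)$, we get
$$A(\gamma)-A(\gamma_0)=J(\gamma)-J(\gamma_0)=\bigtriangleup J,$$
with $J$ as in \eqref{eq1.01}. Thus it suffices to show $\bigtriangleup J<0$ for every such $\gamma\neq\gamma_0$ (modulo orientation-preserving reparametrization), which converts the constrained maximum problem for $A$ into an unconstrained strict-maximum problem for the single functional $J$, at the cost of restricting comparison curves to the constraint manifold $L=l$.

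The second step is to embed $\gamma_0$ in a field of extremals of $J$. The strengthened Legendre condition (5), $\sum_{i,j}h_{\dot{x}^i\dot{x}^j}y^iy^j<0$ for $y$ not proportional to $\dot\gamma_0$, guarantees that the Euler--Lagrange system \eqref{eq1.0.1} can be solved for the accelerations and that extremals issuing from initial data near $\gamma_0$ depend smoothly on that data. The negative definiteness of the second variation required in (4), restricted to admissible variations vanishing at the endpoints and satisfying the linearized isoperimetric condition, is equivalent — through the accessory (Jacobi) problem — to the absence of a point conjugate to $t_0$ on $(t_0,t_1]$; normality (2) is precisely what makes the accessory problem itself normal, so that Jacobi's theory applies with no abnormal degeneracy. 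Together these facts let us surround $\gamma_0$ by a tube on which a smooth field slope $p(x)$ is defined, and the field is a Mayer field, so that the Hilbert invariant integral of $J$ is path-independent there.

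With the field in hand, the third step is the Weierstrass representation. For any admissible $\gamma$ in the tube joining the endpoints, path-independence of the Hilbert integral yields
$$\bigtriangleup J=J(\gamma)-J(\gamma_0)=\int_{t_0}^{t_1}E\big(x(t),p(x(t)),\dot{x}(t)\big)\,dt,$$
where $E$ is the Weierstrass excess function of \eqref{1.25}, evaluated with the field slope $p$ in the reference slot and the comparison derivative $\dot{x}$ in the free slot. Condition (3) gives $E<0$ whenever $\dot{x}$ is not a positive multiple of $p(x)$, so $\bigtriangleup J\le 0$, with equality only if $\dot{x}=k\,p(x)$ along the whole curve, i.e.\ $\gamma$ everywhere follows the field direction. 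In that borderline case $\gamma$ is a field trajectory through the same endpoints, and the no-conjugate-point property furnished by (4) (which prevents the field extremals from refocusing) forces the only such trajectory to be $\gamma_0$ itself. Hence $\bigtriangleup J<0$ for every $\gamma\neq\gamma_0$, so $A(\gamma)<A(\gamma_0)$, and $\gamma_0$ is a proper strong maximum.

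The main obstacle is the field construction in the presence of the integral side condition. Because the isoperimetric constraint $L=l$ is nonlocal, the field must be built for the auxiliary multiplier problem governed by $h=f+\lambda_0 g$ of \eqref{eq1.02} rather than for $f$ alone, and one must verify both that the Hilbert invariant integral still captures the increment $\bigtriangleup J$ correctly modulo the multiplier term and that comparison curves obeying the integral constraint remain within the field's domain. The second, more delicate point is the passage from the pointwise inequalities (3) and (5) and the global second-variation inequality (4) to a genuine \emph{strong} maximum, which demands controlling large deviations of $\dot{x}$ uniformly over the tube rather than merely infinitesimal ones; this uniform control over strong variations is exactly what Hestenes' apparatus for the problem of Bolza with isoperimetric side conditions is designed to supply.
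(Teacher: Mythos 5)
The paper does not prove this theorem at all: it is quoted verbatim as Hestenes' sufficiency theorem for isoperimetric problems, with the citation \cite{MH} standing in for the proof. So there is no in-paper argument to compare against; what can be assessed is whether your field-theoretic sketch would actually deliver the statement, and there it has two genuine gaps.

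First, the integrand here is parametric: both $f$ and $g$ (hence $h=f+\lambda_0 g$) are positively $1$-homogeneous in $(\dot{x}^1,\dot{x}^2)$, so by Euler's relation $h_{\dot{x}^i\dot{x}^j}\dot{x}^j=0$ and $\det\left(h_{\dot{x}^i\dot{x}^j}\right)\equiv 0$. Your claim that condition (5) lets you ``solve the Euler--Lagrange system for the accelerations'' and get smooth dependence of extremals on initial data is therefore false as stated: the EL system is underdetermined because of reparametrization invariance, and condition (5) is only the parametric Legendre condition (note that in Proposition \ref{prop1.06} the form has rank one and vanishes exactly on $y\propto\dot{x}$). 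One must first pass to a reduced nonparametric problem (fixing a parametrization, as the paper implicitly does in its Jacobi-equation reduction to a single scalar $\omega$) before any field machinery can be invoked. Second, and more seriously: to build a Mayer field of extremals of $J$ around $\gamma_0$ you need the \emph{unconstrained} Jacobi condition for the accessory problem of $h$, i.e.\ negativity of $J''(\gamma_0,y)$ over \emph{all} variations vanishing at the endpoints. Hypothesis (4) only gives negativity over variations satisfying the linearized isoperimetric condition $\int_{t_0}^{t_1}(g_{x^i}y^i+g_{\dot{x}^i}\dot{y}^i)\,dt=0$, and the corresponding constrained conjugate points (which is how the paper defines them, with exactly this orthogonality side condition) can occur strictly later than the unconstrained ones. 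So (4) does not imply the absence of unconstrained conjugate points and hence does not furnish the field whose Hilbert invariant integral your third step relies on; your parenthetical ``(which prevents the field extremals from refocusing)'' is precisely the unjustified leap. This gap is not incidental: Hestenes' 1938 paper, the one the theorem is cited from, was written specifically to give a sufficiency proof for isoperimetric problems \emph{without} constructing a field, proceeding instead by an indirect expansion argument — writing $\bigtriangleup J$ as second-variation plus Weierstrass-excess terms along a minimizing sequence of constraint-satisfying comparison curves and deriving a contradiction from (3), (4), (5). If you want to keep the field-theoretic route you would need to either strengthen (4) to the unconstrained second-variation condition or adapt the field construction to the isoperimetric setting (e.g.\ via the problem of Bolza with the constraint adjoined as an extra state variable), neither of which your sketch supplies.
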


 \section{Isoperimetric Problem  with respect to the well known volume forms}
 In this Section, we prove Theorem \ref{thm1} as stated in the introduction. We begin by proving the following Lemma.
\begin{proposition}\label{prop1.01}
      Let $F(x,y) = \alpha+\beta$ be the Randers metric, where $\alpha$ is the Euclidean metric and $\beta=b(\cos \theta(x^1,x^2) dx^1 + \sin \theta(x^1,x^2) dx^2)$ is a one form.  Let $\gamma(t) = (x^{1}(t),x^{2}(t))$ be a simple closed curve in $\mathbb{R}^2$, with $t\in[t_{0},t_{1}]$ and $\gamma(t_{0})=\gamma(t_{1})$. Then\\
      (i) The  Randers length $L$ of the curve  $\gamma$, is given by,
      \begin{equation}\label{1.1.05}
       L=\int\limits_{t_{0}}^{t_{1}}\left( \sqrt{({\dot{x}^{1}})^2+{(\dot{x}^{2}})^2}+b\left( \cos \theta(x^1,x^2) \dot{x}^{1} + \sin \theta(x^1,x^2) \dot{x}^{2}\right) \right) dt.
      \end{equation}
      (ii) The Busemann-Hausdorff area $A_{BH}$ enclosed by the curve $\gamma$ is given by,
       \begin{equation}\label{1.1.06}
      A_{BH} =\frac{(1-b^2)^{\frac{3}{2}}}{2}\int\limits_{t_{0}}^{t_{1}}(x^{1}\dot{x}^{2}-x^{2}\dot{x}^{1})dt.
      \end{equation}
      (iii) The Holmes-Thompson area $A_{HT}$ enclosed by the curve $\gamma$ is given by,
             \begin{equation}\label{1.1.006}
            A_{HT} =\frac{1}{2}\int\limits_{t_{0}}^{t_{1}}(x^{1}\dot{x}^{2}-x^{2}\dot{x}^{1})dt.
            \end{equation}
(iv) The maximum area $A_{max}$ with respect to the maximum volume form $dV_{max}$ enclosed by the curve $\gamma$ is given by,
             \begin{equation}\label{1.1.0006}
            A_{max} =\frac{(1+b)^3}{2}\int\limits_{t_{0}}^{t_{1}}(x^{1}\dot{x}^{2}-x^{2}\dot{x}^{1})dt.
            \end{equation}
(v) The minimum area $A_{min}$ with respect to the minimum volume form $dV_{min}$ enclosed by a simple closed curve $\gamma$ is given by,
             \begin{equation}\label{1.1.1006}
            A_{min} =\frac{(1-b)^3}{2}\int\limits_{t_{0}}^{t_{1}}(x^{1}\dot{x}^{2}-x^{2}\dot{x}^{1})dt.
            \end{equation}
      \end{proposition}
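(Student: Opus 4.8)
The plan is to dispatch part (i) by direct substitution and to treat parts (ii)--(v) uniformly, reducing all four area computations to a single Euclidean area integral via Green's theorem.

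For (i), I would evaluate the Randers length $L = \int_{t_0}^{t_1} F(\gamma(t),\dot\gamma(t))\,dt$ directly. Since $\alpha$ is the Euclidean metric, $\alpha(\dot\gamma) = \sqrt{(\dot x^1)^2 + (\dot x^2)^2}$, while contracting the one-form $\beta$ with the velocity gives $\beta(\dot\gamma) = b\bigl(\cos\theta\,\dot x^1 + \sin\theta\,\dot x^2\bigr)$. Adding these and integrating yields \eqref{1.1.05} at once.

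The key observation for parts (ii)--(v) is that $\|\beta\|_\alpha$ is constant. Because $\alpha$ is Euclidean, the relevant norm is just the coordinate norm of the coefficient vector, so
\[
\|\beta\|_\alpha = \sqrt{(b\cos\theta)^2 + (b\sin\theta)^2} = b,
\]
independent of both $\theta$ and the base point. Feeding $\|\beta\|_\alpha = b$ and $n=2$ into Lemma \ref{lem1.1} gives the density $(1-b^2)^{3/2}$ for the Busemann--Hausdorff form and $1$ for the Holmes--Thompson form, while the lemma of \cite{BYW} gives $(1+b)^3$ and $(1-b)^3$ for the maximum and minimum forms. In every case the Finslerian area density is a constant multiple of the Euclidean area element $dx^1\,dx^2$.

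Since each density is constant, the enclosed area $A = \iint_D \sigma(x)\,dx^1 dx^2$ factors as $A = f(b)\,A_\alpha$, where $A_\alpha = \iint_D dx^1\,dx^2$ is the Euclidean area and $f(b)$ is the relevant constant. It then remains only to convert $A_\alpha$ into the stated boundary integral: applying Green's theorem to the positively oriented simple closed curve $\gamma$ gives
\[
A_\alpha = \frac{1}{2}\int_{t_0}^{t_1}\bigl(x^1\dot x^2 - x^2\dot x^1\bigr)\,dt,
\]
and multiplying through by $f(b) = (1-b^2)^{3/2},\,1,\,(1+b)^3,\,(1-b)^3$ yields \eqref{1.1.06}, \eqref{1.1.006}, \eqref{1.1.0006}, \eqref{1.1.1006} respectively. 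The calculation is essentially routine; the one point deserving care is the cancellation of the $\theta$-dependence in $\|\beta\|_\alpha$, which is precisely what makes the volume densities spatially constant and thereby lets $f(b)$ be pulled out of the area integral before Green's theorem is invoked.
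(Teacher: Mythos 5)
Your proposal is correct and follows essentially the same route as the paper: invoke Lemma \ref{lem1.1} (and the corresponding lemma of Wu) with $n=2$ to get the constant densities, then apply Green's theorem to convert the Euclidean area integral into the stated boundary integral. The only difference is that you spell out the cancellation $\|\beta\|_\alpha = \sqrt{(b\cos\theta)^2+(b\sin\theta)^2}=b$ and the treatment of parts (iii)--(v) explicitly, which the paper leaves implicit by declaring them ``immediate'' or ``analogous.''
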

      \begin{proof}
$(i)$ The proof of $(i)$  is immediate.\\
$(ii)$ From Lemma \ref{lem1.1} the Busemann-Hausdorff volume form of a Randers metric $\alpha+\beta$ is given by $(1-b^2)^{\frac{n+1}{2}}dx$. Since, in our case $n=2$, we have Busemann-Hausdorff volume form of $F$ is $(1-b^2)^{\frac{3}{2}}dx^1dx^2$. Hence the area enclosed by the simple closed curve $\gamma$ is
 \begin{equation}
 A_{BH}=\iint\limits_{R}(1-b^2)^{\frac{3}{2}}dx^1dx^2,
 \end{equation}
 where $R$ is the region enclosed by the curve $\gamma$.
 Using Green's Theorem we obtain \ref{1.1.06} immediately.\\
 The proof of $(iii)$, $(iv)$, $(v)$ are analogous to $(ii)$. 
      \end{proof}\\

We first solve the isoperimetric problem for the Busemann-Hausdorff volume form. Let us consider the functional given by \ref{eq1.01}, where $L$ and $A_{BH}$ are given by \ref{1.1.05} and \ref{1.1.06}, respectively. Therefore, we obtain the Lagrange function of $J$ as:
 \begin{equation}\label{1.07}
h= f+\lambda g,
\end{equation}
where,
\begin{equation}
f=\frac{(1-b^2)^{\frac{3}{2}}}{2}(x^{1}\dot{x}^{2}-x^{2}\dot{x}^{1}), \quad g=\sqrt{(\dot{x}^{1})^2+(\dot{x}^{2})^2}+b(\cos \theta(x^1,x^2) \dot{x}^1 +\sin \theta(x^1,x^2) \dot{x}^2).
\end{equation}
 Let us consider the smooth curve,
 \begin{equation}\label{1.08}
\gamma(t) = (x^{1}(t),x^{2}(t)) = (r(t)\cos t, r(t)\sin t).
\end{equation}
Differentiate $\gamma(t)$ with respect to $t$ we get,
\begin{equation}\label{1.09}
\dot{\gamma}(t) = (\dot{x}^{1}(t),\dot{x}^{2}(t)) =(\dot{r}(t)\cos t-r(t)\sin t, \dot{r}(t)\sin t+r(t)\cos t).
\end{equation}
Hence, along $\gamma(t)$  we get,
 \begin{equation}\label{1.10}
x^{1}\dot{x}^2-x^{2}\dot{x}^1 = |x|^2 = r^2,~~ |\dot{x}|^{2} = r^{2}+\dot{r}^{2}.
\end{equation}
 Using \ref{1.10} in \ref{1.07} we get,
\begin{equation}\label{1.11}
h(r,\dot{r}, t) = \frac{1}{2}(1-b^2)^\frac{3}{2}r^2+ \lambda (\sqrt{r^2+\dot{r}^2}+b(\dot{r}\cos (\theta-t) + r\sin (\theta-t))).
\end{equation}
The Euler-Lagarange equation of J is,
\begin{equation}\label{1.12}
\frac{\partial h}{\partial r}-\frac{d}{dt}\frac{\partial h}{\partial \dot{r}}=0.
\end{equation}
From the Euler- Lagrange equation given in \ref{1.12} we have,
\begin{equation}
\begin{split}
(1-b^2)^{\frac{3}{2}}r+\lambda\left(\frac{r}{\sqrt{r^2+\dot{r}^2}}+b(\left( -\dot{r}\sin(\theta-t)+r\cos(\theta-t))\theta_{r} +\sin(\theta-t)\right)  \right)\\ -\lambda\frac{d}{dt}\left(\frac{\dot{r}}{\sqrt{r^2+\dot{r}^2}}  +b\cos(\theta-t)\right) =0.
\end{split}
\end{equation}
Along $\gamma_0(t)=(a\cos t, a\sin t)$, $r$ is constant. Hence, $\theta_r=0$.  Therefore, the above equation reduces to,
      \begin{equation}\label{1.1.0}
      \lambda\left[ 1+b\dot{\theta}\sin(\theta-t)\right] =-a(1-b^2)^{\frac{3}{2}}.
      \end{equation}
Hence,
 \begin{equation}\label{eq1.22}
 \lambda=-\frac{a(1-b^2)^{\frac{3}{2}}}{1+b\dot{\theta}\sin(\theta-t)}.
 \end{equation}
To prove Theorem \ref{thm1}, we require $\lambda$ to be a negative constant. This is achieved if, $\dot{\theta}\sin(\theta-t)$ is constant and greater than $-1$. Therefore, for the sake of simplicity we may assume $\dot{\theta}\sin(\theta-t)=0$. then clearly $\lambda<0$. Hence, either $\theta=$ constant along $\gamma_0$, or, $\theta-t=n\pi$, for all, $n\in \mathbb{Z}$. More generally, if we consider $\theta-t=c$, for some constant $c$, then also $\lambda<0$, as $b<1$.\\
   To prove Theorem \ref{thm1}, we assume  $\theta=t+c$ and Corollary \ref{cor2} will be dealt with the case,  $\theta\equiv$ constant.
\begin{proposition}\label{prop1.02}
 The circles
\begin{equation}\label{1.14}
\gamma_0=(a\cos t, a\sin t)
\end{equation} are normal as per Definition \ref{def1.001}.
\end{proposition}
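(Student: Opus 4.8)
The plan is to verify Definition \ref{def1.001} directly: compute the functions $P_i = g_{x^i} - \frac{d}{dt}g_{\dot x^i}$ for $i=1,2$ along $\gamma_0$ and check that neither one is the zero function. Here $g = \sqrt{(\dot x^1)^2+(\dot x^2)^2} + b(\cos\theta\,\dot x^1 + \sin\theta\,\dot x^2)$ with $\theta = \theta(x^1,x^2)$, so differentiating $g$ with respect to $x^i$ produces terms carrying the partial derivatives $\theta_{x^i}$, while $\frac{d}{dt}g_{\dot x^i}$ produces the Euclidean arc-length term $\frac{d}{dt}(\dot x^i/|\dot x|)$ together with a term $\pm\, b\,\dot\theta$ coming from differentiating $\cos\theta$ or $\sin\theta$ along the curve.

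First I would carry out these differentiations in full. The key algebraic observation is that, when one assembles $P_i$, the contributions $\theta_{x^1}\dot x^1$ and $\theta_{x^2}\dot x^2$ cancel in pairs, leaving the compact expressions
\begin{equation*}
P_1 = b\,\dot x^2 D - \frac{d}{dt}\!\left(\frac{\dot x^1}{|\dot x|}\right), \qquad P_2 = -\,b\,\dot x^1 D - \frac{d}{dt}\!\left(\frac{\dot x^2}{|\dot x|}\right),
\end{equation*}
where $D := \theta_{x^1}\cos\theta + \theta_{x^2}\sin\theta$ is the derivative of $\theta$ in the $\beta$-direction. Next I would substitute the circle data from \ref{1.09}--\ref{1.10}, namely $\dot x^1 = -a\sin t$, $\dot x^2 = a\cos t$ and $|\dot x| = a$, so that $\frac{d}{dt}(\dot x^1/|\dot x|) = -\cos t$ and $\frac{d}{dt}(\dot x^2/|\dot x|) = -\sin t$. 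This collapses the two expressions to
\begin{equation*}
P_1 = \cos t\,(1 + abD), \qquad P_2 = \sin t\,(1 + abD).
\end{equation*}
Since $\cos t$ and $\sin t$ are not identically zero, $P_1$ and $P_2$ fail to vanish identically precisely when $1 + abD \not\equiv 0$, so the whole question reduces to evaluating the single scalar $D$ along $\gamma_0$.

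Finally I would compute $D$ in polar coordinates. Writing $\theta_{x^1} = \theta_r\cos t - \theta_t\frac{\sin t}{r}$ and $\theta_{x^2} = \theta_r\sin t + \theta_t\frac{\cos t}{r}$ and using $\theta - t = c$ (so that $\theta_t = 1$ and $r = a$), the angle-addition identities give $D = \theta_r\cos c + \frac{1}{a}\sin c$. Because the hypothesis $\theta = t + c$ makes $\theta$ a function of the polar angle alone near $\gamma_0$, one has $\theta_r = 0$ and hence $D = \frac{\sin c}{a}$, so that $1 + abD = 1 + b\sin c$. As $0 < b < 1$, we get $1 + b\sin c \ge 1 - b > 0$, whence $P_1 = \cos t\,(1 + b\sin c)$ and $P_2 = \sin t\,(1 + b\sin c)$ are non-zero functions and $\gamma_0$ is normal. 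The case $\theta \equiv \text{constant}$ of Corollary \ref{cor2} is even simpler: then $D = 0$, giving $P_1 = \cos t$ and $P_2 = \sin t$.

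I expect the main obstacle to be the bookkeeping in the cancellation step that yields the compact form of the $P_i$, and in particular justifying that the radial derivative $\theta_r$ makes no contribution. This rests on reading the hypothesis $\theta = t + c$ as the statement that $\theta$ is constant along rays through the origin in a neighbourhood of $\gamma_0$, so that $\theta_r = 0$ there; if instead the relation is imposed only on $\gamma_0$ itself, one must argue separately that $1 + ab\big(\theta_r\cos c + \tfrac{1}{a}\sin c\big)$ is not identically zero, which holds for all but a single exceptional value of the radial derivative.
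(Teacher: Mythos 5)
Your proof is correct, and it reaches the conclusion by a genuinely different (and in fact more careful) route than the paper. The paper substitutes $\theta=t+c$ into the length integrand \emph{before} differentiating, so that $g=\sqrt{(\dot x^1)^2+(\dot x^2)^2}+b\left(\cos(t+c)\,\dot x^1+\sin(t+c)\,\dot x^2\right)$ is treated as a function of $t$ and $\dot x$ alone; then $g_{x^1}=g_{x^2}=0$, $P_i=-\frac{d}{dt}g_{\dot x^i}$, and along $\gamma_0$ one gets $P_1=\cos t+b\sin(t+c)$, $P_2=\sin t-b\cos(t+c)$, which cannot vanish identically because $b<1$. You instead keep $\theta$ position-dependent, retain the $g_{x^i}$ terms (which are \emph{not} zero: along $\gamma_0$ one finds $g_{x^1}=-b\sin t\cos c$ and $g_{x^2}=b\cos t\cos c$), and exploit the cancellation that reduces everything to the scalar $D=\theta_{x^1}\cos\theta+\theta_{x^2}\sin\theta$; your final expressions $P_1=\cos t\,(1+b\sin c)$, $P_2=\sin t\,(1+b\sin c)$ differ from the paper's by exactly the $g_{x^i}$ contributions the paper discards. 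Both computations yield normality, but yours is the one faithful to the stated data, in which $\beta$ is a one-form on the plane and hence $\theta$ is a function of $(x^1,x^2)$; the paper's shortcut implicitly replaces the constraint by a time-dependent Lagrangian, which alters the functional on curves neighbouring $\gamma_0$. Your approach also makes explicit the hypothesis the paper glosses over: the assertion in the text preceding Proposition \ref{prop1.02} that ``along $\gamma_0$, $r$ is constant, hence $\theta_r=0$'' is a non sequitur unless $\theta=t+c$ is assumed on all circles near $\gamma_0$ (equivalently, $\theta$ depends only on the polar angle), and your closing paragraph correctly isolates $\theta_r$ as the one quantity not determined by the hypothesis, showing normality survives for all but a single exceptional value of it. In short, the paper's route buys brevity; yours buys correct intermediate formulas and an honest account of the off-curve behaviour of $\theta$.
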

\begin{proof}
Let us consider,
\begin{equation}
f(x^{1},x^{2},\dot{x}^1,\dot{x}^{2})=\frac{(1-b^2)^{\frac{3}{2}}}{2}(x^{1}\dot{x}^{2}-x^{2}\dot{x}^{1})
\end{equation}
and
\begin{equation}\label{1.1.17}
g(x^{1},x^{2},\dot{x}^{1},\dot{x}^{2})=\sqrt{(\dot{x}^{1})^2+(\dot{x}^{2})^2}+b(\cos (t+c) \dot{x}^1 +\sin (t+c) \dot{x}^2).
\end{equation}
Differentiating $g$ with respect to $x^1$, $x^2$, $\dot{x}^{1}$ and $\dot{x}^{2}$ respectively, we get,
\begin{equation}\label{1.1.19}
g_{x^{1}}=g_{x^{2}}=0, \quad g_{\dot{x}^{1}}=\frac{\dot{x}^{1}}{\sqrt{(\dot{x}^{1})^2+(\dot{x}^{2})^2}}+b\cos (t+c), \quad  g_{\dot{x}^{2}}=\frac{\dot{x}^{2}}{\sqrt{(\dot{x}^{1})^2+(\dot{x}^{2})^2}}+b\sin (t+c).
\end{equation}
Along the circle given by $\gamma_0$ we have,
\begin{equation}\label{1.1.21}
g_{\dot{x}^{1}}=-\sin t+ b\cos (t+c),~~g_{\dot{x}^{2}}=\cos t+ b\sin (t+c).
\end{equation}
Since,
\begin{equation}\label{1.1.22}
P_{j}=g_{x^{j}}-\frac{d}{dt}g_{\dot{x}^{j}},\quad \textnormal{for} ~~j=1,2,
\end{equation}
along $\gamma_0$ we have,
\begin{equation*}
P_{1}=\cos t+b\sin (t+c), \quad P_{2}=\sin t-b\cos (t+c).
\end{equation*}
Therefore, it follows that 
\begin{equation}\label{1.1.23}
(P_{1},P_{2})\neq 0,
\end{equation}
and hence the circles $\gamma _{0}$ are normal.
\end{proof}
\begin{proposition}\label{prop1.03}
If $\lambda <0$, then the Weierstrass function $E$ of the integral $J$ satisfies 
\begin{equation}\label{1.1.26}
E(x^{1},x^{2},\dot{x}^{1},\dot{x}^{2},u^{1},u^{2})\leq 0.
\end{equation}
Moreover, the equality holds if and only if $(u^{1},u^{2})=k(\dot{x}^{1},\dot{x}^{2})$, where $k$ is a positive constant.
\end{proposition}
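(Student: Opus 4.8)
The plan is to exploit the special structure of the integrand $h = f + \lambda g$. The key preliminary observation is that $f = \tfrac{(1-b^2)^{3/2}}{2}(x^1\dot{x}^2 - x^2\dot{x}^1)$ and the one-form part $b(\cos\theta\,\dot{x}^1 + \sin\theta\,\dot{x}^2)$ of $g$ are both linear in the velocities $(\dot{x}^1,\dot{x}^2)$ (here $\theta = \theta(x^1,x^2)$ does not depend on $\dot{x}$). For any integrand $\ell(x,\dot{x})$ that is affine-linear in $\dot{x}$, the excess expression $\ell(x,u) - \ell(x,\dot{x}) - \sum_j (u^j - \dot{x}^j)\,\ell_{\dot{x}^j}(x,\dot{x})$ vanishes identically, since $\ell$ equals its own first-order Taylor polynomial in $\dot{x}$. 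Substituting into \eqref{1.25}, the contributions of $f$ and of the $\beta$-term cancel, and the Weierstrass function collapses to $E = \lambda\,\mathcal{E}_\alpha$, where $\mathcal{E}_\alpha$ is the Weierstrass excess of the Euclidean norm $\alpha(\dot{x}) = \sqrt{(\dot{x}^1)^2 + (\dot{x}^2)^2}$.

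Next I would evaluate $\mathcal{E}_\alpha$ explicitly. Using $\alpha_{\dot{x}^j} = \dot{x}^j/\alpha(\dot{x})$ together with the $1$-homogeneity of $\alpha$ (Euler's identity $\sum_j \dot{x}^j\alpha_{\dot{x}^j} = \alpha(\dot{x})$), the linear terms telescope and one is left with
\begin{equation*}
\mathcal{E}_\alpha = \alpha(u) - \frac{\langle \dot{x}, u\rangle}{\alpha(\dot{x})},
\end{equation*}
where $\langle\cdot,\cdot\rangle$ denotes the Euclidean inner product (the denominator being nonzero since $\dot{x}\ne 0$ along an admissible curve). The Cauchy--Schwarz inequality gives $\langle\dot{x},u\rangle \le \alpha(\dot{x})\,\alpha(u)$, hence $\mathcal{E}_\alpha \ge 0$. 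Since $\lambda < 0$, this yields $E = \lambda\,\mathcal{E}_\alpha \le 0$, which is \eqref{1.1.26}.

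Finally I would settle the equality case. Because $\lambda \ne 0$, equality in $E \le 0$ is equivalent to $\mathcal{E}_\alpha = 0$, i.e. to equality in Cauchy--Schwarz, which forces $u$ and $\dot{x}$ to be positively proportional: $u = k\dot{x}$ with $k > 0$ (positivity of $k$ is fixed by $\langle\dot{x},u\rangle = \alpha(\dot{x})\alpha(u) \ge 0$ and $u \ne 0$). Conversely, substituting $u = k\dot{x}$ and using $1$-homogeneity returns $\mathcal{E}_\alpha = 0$. I expect the only delicate point to be precisely this equality analysis: since $\alpha$ is convex but merely $1$-homogeneous---and therefore not strictly convex---its excess vanishes along an entire ray rather than at the single point $u = \dot{x}$, and it is exactly this degeneracy that produces the factor $k > 0$ in the statement. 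Everything else reduces to the linearity remark and Cauchy--Schwarz.
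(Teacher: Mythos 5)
Your proof is correct and follows essentially the same route as the paper's: the terms of $h$ that are affine in the velocities (the area integrand $f$ and the $\beta$-part of $g$) drop out of the excess, leaving $E=\lambda\left(\|u\|-\frac{\langle\dot{x},u\rangle}{\|\dot{x}\|}\right)$, and Cauchy--Schwarz together with $\lambda<0$ finishes the argument. If anything, your equality analysis is slightly sharper than the paper's, which claims equality iff $u=k\dot{x}$ ``for any constant $k$''; your observation that equality in Cauchy--Schwarz forces \emph{positive} proportionality ($k>0$, since $\langle\dot{x},u\rangle=\|\dot{x}\|\,\|u\|\ge 0$) is the precise form needed for condition (3) of Theorem \ref{thm1.01}.
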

\begin{proof}
In this case we obtain,
\begin{equation}\label{1.1.28}
\frac{\partial h}{\partial \dot{x}^{1}}=-\frac{(1-b^2)^{\frac{3}{2}}}{2}x^{2}+\lambda\left(  \frac{\dot{x}^{1}}{\sqrt{(\dot{x}^{1})^{2}+(\dot{x}^{2})^{2}}}+b\cos (t+c)\right) 
\end{equation}
and
\begin{equation}\label{1.1.29}
\frac{\partial h}{\partial \dot{x}^{2}}=\frac{(1-b^2)^{\frac{3}{2}}}{2}x^{1}+\lambda\left(  \frac{\dot{x}^{2}}{\sqrt{(\dot{x}^{1})^{2}+(\dot{x}^{2})^{2}}}+b\sin (t+c)\right).
\end{equation}
Using \ref{1.1.28} and \ref{1.1.29} in \ref{1.25} and simplifying we get,
\begin{equation}\label{1.1.33}
E(x,\dot{x},u)=\lambda\left( \sqrt{(u^{1})^{2}+(u^{2})^{2}}-\frac{\dot{x}^{1}u^{1}+\dot{x}^{2}u^{2}}{\sqrt{(\dot{x}^{1})^{2}+(\dot{x}^{2})^{2}}}\right) =\frac{\lambda}{\| \dot{x}\|}\left( \| u\| \| \dot{x}\| -(\dot{x}^{1}u^{1}+\dot{x}^{2}u^{2})\right).
\end{equation}
Now using  Cauchy-Schwarz  inequality,
\begin{equation}\label{1.1.34}
(\| u\| \| x\| -(\dot{x}^{1}u^{1}+\dot{x}^{2}u^{2}))\ge 0,
\end{equation}
and the fact that, $\lambda \le 0$, we obtain
\begin{equation}\label{1.1.35}
E(x,\dot{x},u)\le 0.
\end{equation}
It is evident that the equality holds if and only if $u=k\dot{x}$, for any constant $k$. Hence, we conclude the proof. 
\end{proof}
\begin{proposition}\label{prop1.04}
The Jacobi equation along the isoperimetric extremal circles $\gamma _{0}$ of the Randers metric $F=\sqrt{(dx^1)^2+(dx^2)^2}+ b(\cos \theta(t+c) dx^1 +\sin\theta(t+c) dx^2)$, is given by 
\begin{equation}\label{1.1.37}
\frac{d^{2}\omega}{dt^{2}}+\omega-\frac{\mu a^{2}}{\lambda}=0,
\end{equation} 
where $\mu$ is a constant.
\end{proposition}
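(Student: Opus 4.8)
The plan is to reduce the two-dimensional accessory (Jacobi) problem attached to the second variation $J''(\gamma_0,\cdot)$ to the single scalar ODE in the statement, by passing to polar coordinates and using that, along a circle, the tangential variations are null directions of the length and area functionals. Concretely, I would work from the polar Lagrangian \ref{1.11}, which under the standing hypothesis $\theta=t+c$ (so that $\theta-t=c$ and $\theta_r=0$) becomes autonomous, $h(r,\dot r)=\tfrac12(1-b^2)^{3/2}r^2+\lambda\sqrt{r^2+\dot r^2}+\lambda b(\dot r\cos c+r\sin c)$, with the circle $r\equiv a$ as extremal.

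First I would compute the second-variation coefficients of $h$ at $(r,\dot r)=(a,0)$. A short calculation gives $h_{rr}=(1-b^2)^{3/2}$, $h_{r\dot r}=0$ and $h_{\dot r\dot r}=\lambda/a$, the last two reflecting that $g$ is positively $1$-homogeneous in the velocities, so that its velocity-Hessian is the rank-one projection orthogonal to $\dot\gamma_0$. Writing the radial variation as $\eta(t)$, the quadratic integrand of $J''$ is therefore $2\omega=(1-b^2)^{3/2}\eta^2+\tfrac{\lambda}{a}\dot\eta^2$. To be sure this polar reduction loses nothing, I would decompose a general field in the moving frame as $y=\eta\,e_r+\xi\,e_t$ with $e_r=(\cos t,\sin t)$ and $e_t=(-\sin t,\cos t)$; using $\dot e_r=e_t$, $\dot e_t=-e_r$ and the extremal value of $\lambda$, all terms carrying the tangential component $\xi$ should collapse into total time-derivatives, i.e.\ into null Lagrangians. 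This is the analytic counterpart of the fact, already built into condition (4) of Theorem \ref{thm1.01}, that $y=\rho\,\dot\gamma_0$ does not change the functional.

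Next I would set up the accessory problem: extremize $\int_{t_0}^{t_1}2\omega\,dt$ subject to the linearized isoperimetric constraint $\int_{t_0}^{t_1}(g_{x^i}y^i+g_{\dot x^i}\dot y^i)\,dt=0$. Integrating by parts and invoking the quantities $P_i=g_{x^i}-\tfrac{d}{dt}g_{\dot x^i}$ from Proposition \ref{prop1.02}, this constraint reduces to a nonzero multiple of $\int\eta\,dt=0$. Introducing a Lagrange multiplier $\mu$ for it, the Euler equation of the augmented functional $\int(2\omega-\mu\,\eta)\,dt$ reads $\tfrac{2\lambda}{a}\ddot\eta-2(1-b^2)^{3/2}\eta+\mu=0$, that is, $\ddot\eta-\tfrac{a(1-b^2)^{3/2}}{\lambda}\eta+\kappa\mu=0$ for a constant $\kappa$. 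Substituting the extremal value $\lambda=-a(1-b^2)^{3/2}$ (the value \ref{eq1.22} takes under the simplifying assumption $\dot\theta\sin(\theta-t)=0$ invoked earlier) turns the coefficient of $\eta$ into $+1$, and after renaming $\eta$ as $\omega$ and normalizing the constant one obtains exactly $\ddot\omega+\omega-\tfrac{\mu a^2}{\lambda}=0$.

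The step I expect to be the main obstacle is the verification that the tangential component drops out into total derivatives. This cancellation is not term-by-term: it occurs only after the $\beta$-generated pieces of $h_{x^ix^j}$ and $h_{x^i\dot x^j}$ are combined and the extremal relation for $\lambda$ is substituted, and it is precisely this cancellation that collapses the $2\times2$ second-order Jacobi system to a single scalar equation with unit coefficient on $\omega$. Once that is in hand, the inhomogeneous term $-\mu a^2/\lambda$ follows routinely from attaching the multiplier to the length constraint.
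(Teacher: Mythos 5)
Your reduction is a genuinely different route from the paper's proof: the paper stays in Cartesian coordinates and uses Bolza's transformed accessory equation for parametric isoperimetric problems, computing the invariants $h_1=h_{\dot x^1\dot x^1}/(\dot x^2)^2$, $K$, $h_2$ and $U$ along $\gamma_0$ and writing the Jacobi equation as $\Psi(\omega)+\mu U=0$, whereas you compute the second variation directly on radial fields in polar coordinates and attach a multiplier to the linearized length constraint. Up to the last step your computation is correct: at $(r,\dot r)=(a,0)$ one has $h_{rr}=(1-b^2)^{3/2}$, $h_{r\dot r}=0$, $h_{\dot r\dot r}=\lambda/a$; the linearized constraint does reduce, after an integration by parts, to a positive multiple of $\int\eta\,dt=0$; and the accessory Euler equation is $\ddot\eta-\tfrac{a(1-b^2)^{3/2}}{\lambda}\eta+\kappa\mu=0$. (Your worry about the tangential component is overstated but harmless: $f$ and $g$ are positively $1$-homogeneous in $\dot x$, so along an extremal the second variation depends only on the normal part of the variation field; this is the standard parametric-problem fact.)

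The genuine gap is the final substitution $\lambda=-a(1-b^2)^{3/2}$. Proposition \ref{prop1.04} is asserted under the standing hypothesis $\theta=t+c$, and there \ref{eq1.22} gives $\lambda=-\tfrac{a(1-b^2)^{3/2}}{1+b\sin c}$, because $\dot\theta\sin(\theta-t)=\sin c$; the value you substitute is valid only when $\sin c=0$ (i.e.\ $\theta$ constant or $\theta-t=n\pi$), and the paper explicitly passes from the simplifying assumption $\dot\theta\sin(\theta-t)=0$ to the general case $\theta-t=c$ before this proposition. With the correct $\lambda$ your own equation reads $\ddot\eta+(1+b\sin c)\eta+\kappa\mu=0$, and no renaming of constants turns the frequency $\sqrt{1+b\sin c}$ into the frequency $1$ of \ref{1.1.37}. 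So your argument proves the statement only in the sub-case $\sin c=0$ (enough for Corollaries \ref{cor1} and \ref{cor2}, not for Theorem \ref{thm1} as stated). Moreover the gap cannot be patched, because your polar computation is the careful one: it retains the position dependence of $\theta$, while the paper's proof treats $\theta$ as an explicit function of $t$ alone, so that $g_{x^i}$, $g_{x^ix^j}$, $g_{x^i\dot x^j}$ are all taken to be zero — precisely where the factor $1+b\sin c$ is lost (indeed, for $\cos c\neq0$ the circle is not even an extremal of that time-dependent integrand). For $\sin c\neq0$ the stated conclusion itself fails: writing $(\rho,\phi)$ for Euclidean polar coordinates, the case $c=\pi/2$ gives $\beta=b\rho\,d\phi$, and the curve $\rho=a+\epsilon\cos t$ has Randers length $2\pi a(1+b)+\tfrac{\pi\epsilon^2}{2a}+O(\epsilon^3)$ while enclosing Busemann--Hausdorff area $(1-b^2)^{3/2}\bigl(\pi a^2+\tfrac{\pi\epsilon^2}{2}\bigr)$, strictly larger than the area $(1-b^2)^{3/2}\bigl(\pi a^2+\tfrac{\pi\epsilon^2}{2(1+b)}\bigr)+O(\epsilon^3)$ of the circle of equal length. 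So, carried out with the correct multiplier, your method refutes \ref{1.1.37} for $\sin c\neq0$ rather than proving it; the flaw in your write-up is the unjustified value of $\lambda$, and the discrepancy it conceals is a real one between the correct accessory equation and the paper's.
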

\begin{proof}
For the circle $\gamma_0$ defined by \ref{1.14} we get,
\begin{equation}\label{1.1.38}
h=(1-b^2)^{\frac{3}{2}}(x^{1}\dot{x}^{2}-x^{2}\dot{x}^{1})+\lambda\left( \sqrt{(\dot{x}^{1})^2+(\dot{x}^{2})^2}+b\cos(t+c) \dot{x}^1 + \sin (t+c) \dot{x}^2\right). 
\end{equation}
From \ref{1.1.17} we have,
\begin{equation}\label{1.40}
g_{\dot{x}^{1}\dot{x}^{1}}=\frac{\cos ^{2}t}{a}, \quad g_{\dot{x}^{2}\dot{x}^{2}}=\frac{\sin ^{2}t}{a}, \quad g_{x^{1}\dot{x}^{2}}=g_{\dot{x}^{1}x^{2}}=0.
\end{equation}
Let us define 
\begin{equation}\label{1.52}
U :=g_{x^{1}\dot{x}^{2}}- g_{\dot{x}^{1}x^{2}}-g_{\dot{x}^{1}\dot{x}^{1}}\frac{d}{dt}\left( \frac{\dot{x}^{1}}{\dot{x}^{2}}\right). 
\end{equation}
Then along the circles $\gamma_0$, we get $U=\frac{1}{a}$. Now differentiating $h$ with respect to $\dot{x}^1$ twice,
\begin{equation}\label{1.45}
h_{\dot{x}^{1}\dot{x}^{1}}=f_{\dot{x}^1\dot{x}^{1}}+\lambda g_{\dot{x}^{1}\dot{x}^{1}}.
\end{equation}
Let us define $h_{1}:=\frac{h_{\dot{x}^1\dot{x}^{1}}}{(\dot{x}^{2})^{2}}$. Then along $\gamma_0$, $h_1=\frac{\lambda}{a^{3}}$ and $\frac{dh_{1}}{dt}=0$. Now suppose
\begin{equation*}
K :=h_{x^{1}\dot{x}^{1}}-\dot{x}^{2} \ddot{x}^{2}h_{1}.
\end{equation*} 
Then along the curve $\gamma_0$, we obtain, $K=\frac{\lambda \sin t\cos t}{a}$ and $\frac{dK}{dt}=\frac{\lambda \cos 2t}{a}$.\\
Differentiating $f$ and $g$ with respect to $x^{1}$ twice we get $g_{x^{1}x^{1}}=0$, $f_{x^{1}x^{1}}=0$. Hence, $h_{x^{1}x^{1}}=f_{x^{1}x^{1}}+\lambda g_{x^{1}x^{2}} =0$. Let us define
\begin{equation*}
h_{2}:=\frac{1}{(\dot{x}^{2})^{2}}\left( h_{x^{1}x^{1}}-(\ddot{x}^{2})^{2}h_{1}-\frac{dK}{dt}\right) .
\end{equation*} Then for the circle $\gamma_0$, we have $h_2= -\frac{\lambda}{a^{3}}$.\\
Hence, the Jacobi equation along $\gamma_0$ is of the form,
\begin{equation}\label{1.55}
\Psi+\mu U =0,
\end{equation}
where, 
\begin{equation}\label{1.56}
\Psi (\omega)=h_{2}\omega-\frac{d}{dt}(h_{1}\omega ').
\end{equation}
Here we see that $h_{1}'=0$. Hence from the above values of $h_1$ and $h_2$ we have 
\begin{equation}\label{1.57}
\frac{d^{2}\omega}{dt^{2}}+\omega-\frac{\mu a^{2}}{\lambda}=0.
\end{equation}
\end{proof}\\
Along $\gamma_0 (t)$, a point $\gamma_0 (c)$ will be conjugate point of $\gamma_0 (a)$, if there exist a non-zero solution $\omega$ of the Jacobi equation \ref{1.1.37} along $\gamma_0 (t)$ such that 
\begin{equation}\label{1.58}
\omega (a) = \omega (c) =0
\end{equation}
and
\begin{equation}\label{1.59}
\int\limits_{a}^{c} U \omega dt = 0,
\end{equation}
where $U$ is given by \ref{1.52}, \cite{OB,MH}.
\begin{proposition}\label{prop1.05}
 There is no point which is conjugate to $\gamma (t_0)$ along the isoperimetric extremal circle $\gamma _{0}$.
\end{proposition}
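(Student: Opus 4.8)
The plan is to integrate the scalar Jacobi equation explicitly, convert the conjugate-point conditions into a homogeneous linear system in the free integration constants, and prove that its coefficient determinant does not vanish for any parameter value strictly inside the interval of definition.

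First I would record the general solution of the Jacobi equation \ref{1.1.37}. As it is a constant-coefficient linear equation with constant right-hand side, its solutions are
\[
\omega(t)=A\cos t+B\sin t+D,\qquad D:=\frac{\mu a^{2}}{\lambda},
\]
with $A,B$ arbitrary and $D$ free (the multiplier $\mu$ attached to the accessory constraint being at our disposal). Since \ref{1.1.37} is invariant under translation of $t$, I may normalize the base parameter to $t_{0}=0$; because $\gamma_{0}$ is a simple closed curve described once, its parameter sweeps an interval of length $2\pi$, so $t_{1}=2\pi$. By Proposition \ref{prop1.04} one has $U=\tfrac{1}{a}$ along $\gamma_{0}$, hence the constraint \ref{1.59} becomes simply $\int_{0}^{c}\omega\,dt=0$.

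Next I would impose the conjugate-point conditions. From $\omega(0)=0$ we get $A=-D$, so $\omega(t)=D(1-\cos t)+B\sin t$, and the two remaining conditions $\omega(c)=0$ and $\int_{0}^{c}\omega\,dt=0$ read
\[
D(1-\cos c)+B\sin c=0,\qquad D(c-\sin c)+B(1-\cos c)=0 .
\]
A nonzero $\omega$ exists precisely when this homogeneous system in $(D,B)$ is singular, i.e. when
\[
\Delta(c):=(1-\cos c)^{2}-\sin c\,(c-\sin c)=2(1-\cos c)-c\sin c
\]
vanishes. The heart of the argument is to show $\Delta(c)\neq 0$ for $c\in(0,2\pi)$. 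Via the half-angle identities I would factor $\Delta(c)=4\sin\!\frac{c}{2}\,\psi\!\left(\frac{c}{2}\right)$, where $\psi(u):=\sin u-u\cos u$ satisfies $\psi(0)=0$ and $\psi'(u)=u\sin u>0$ on $(0,\pi)$; hence $\psi>0$ there, while $\sin\frac{c}{2}>0$ on $(0,2\pi)$, so $\Delta(c)>0$ throughout. Consequently the system forces $D=B=0$ and $\omega\equiv 0$, so no point in the interior $(0,2\pi)$ is conjugate to $\gamma_{0}(t_{0})$.

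The main obstacle is the right endpoint $c=2\pi=t_{1}$, where $\Delta$ genuinely vanishes. There $\gamma_{0}(2\pi)$ coincides with the initial point $\gamma_{0}(t_{0})$, as the curve closes, and the admissible solution degenerates to $\omega=B\sin t$. I would argue that this boundary coincidence does not yield a point conjugate to $\gamma_{0}(t_{0})$ in the interior sense demanded by Hestenes' Theorem \ref{thm1.01}: the vanishing of $\Delta$ at $c=2\pi$ merely records the periodicity of the circle rather than a true conjugate point. Making this endpoint degeneracy rigorously harmless is the delicate step, whereas the strict positivity of $\Delta$ on the open interval — once the determinant has been written down — is routine.
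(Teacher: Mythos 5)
Your proposal is correct and follows essentially the same route as the paper: both integrate the constant-coefficient Jacobi equation explicitly as $\omega(t)=c_1\cos t+c_2\sin t+\mu a^2/\lambda$, convert the conditions $\omega(t_0)=\omega(t_1)=0$ and $\int U\omega\,dt=0$ into a homogeneous linear system, and reduce conjugacy to the vanishing of its determinant; indeed your $\Delta(c)=2(1-\cos c)-c\sin c$ is, with $c=t_1-t_0$, exactly $-\frac{\lambda}{a^{2}U}\det(P)$ in the paper's notation. The differences are in execution rather than approach: normalizing $t_0=0$ collapses the paper's $3\times 3$ determinant to a $2\times 2$ one, and --- more substantively --- where the paper merely asserts that ``it can be easily seen'' that $\det(P)=0$ only when $t_1=t_0$, you actually prove this key transcendental inequality via the factorization $\Delta(c)=4\sin\frac{c}{2}\left(\sin\frac{c}{2}-\frac{c}{2}\cos\frac{c}{2}\right)>0$ on $(0,2\pi)$, while your explicit discussion of the degenerate value $c=2\pi$ (periodicity, not genuine conjugacy) plays precisely the role of the paper's tacit restriction $t_0,t_1\in[0,2\pi)$.
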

\begin{proof}
Solving the differential equation \ref{1.57} we get 
\begin{equation}
\omega (t) = c_1\cos t+c_2\sin t+\frac{\mu a^2}{\lambda}.
\end{equation}
Suppose that $t_0,t_1\in[0,2\pi)$ are such that $\gamma(t_1)$ is conjugate to $\gamma(t_0)$. Then $\omega$ satisfy $\ref{1.58}$ and $\ref{1.59}$. Hence, we have,
\begin{equation}
c_1\cos t_0+c_2\sin t_0+\frac{\mu a^2}{\lambda}=0,
\end{equation}
\begin{equation}
c_1\cos t_1+c_2\sin t_1+\frac{\mu a^2}{\lambda}=0,
\end{equation}
\begin{equation}
Uc_1\int\limits_{t_0}^{t_1}\cos t dt+Uc_2\int\limits_{t_0}^{t_1}\sin t dt+U\mu \int\limits_{t_0}^{t_1} \frac{a^2}{\lambda}dt=0.
\end{equation}
The above system of equation has non-zero solution if and only if the determinant of the matrix $P$ is zero, where
\begin{equation*}
P=\begin{pmatrix}
\cos t_0 & \sin t_0 & \frac{a^2}{\lambda} \\ \cos t_1 & \sin t_1 & \frac{a^2}{\lambda} \\ U\int\limits_{t_0}^{t_1}\cos t dt & U\int\limits_{t_0}^{t_1}\sin t dt & U \int\limits_{t_0}^{t_1} \frac{a^2}{\lambda}dt
\end{pmatrix}.
\end{equation*}
Then we get,
\begin{equation}
\det(P)=\frac{a^2}{\lambda}U[(t_1-t_0)\sin (t_1- t_0)+2\cos(t_1-t_0)-2].
\end{equation}
It can be easily seen that $\det(P)=0$ if and only if $t_1=t_0$. This implies that $\gamma_0$ does not have any conjugate points.
\end{proof}

\begin{proposition}\label{prop1.06}
Along the isoperimetric extremal circle $\gamma_{0}$, the following inequality holds:
\begin{equation}\label{1.1.61}
\sum\limits_{i,j =1}^{2}h_{\dot{x}^{i}\dot{x}^{j}}y^{i}y^{j}\leq 0,
\end{equation}
and the equality holds if and only if $(y^{1},y^{2})=k(\dot{x}^1,\dot{x}^2)$.
\end{proposition}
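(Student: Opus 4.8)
The plan is to reduce the quadratic form entirely to the velocity-Hessian of the Riemannian norm $\alpha=\sqrt{(\dot{x}^{1})^{2}+(\dot{x}^{2})^{2}}$. Recall that $h=f+\lambda g$ with $f=\tfrac{1}{2}(1-b^{2})^{3/2}(x^{1}\dot{x}^{2}-x^{2}\dot{x}^{1})$ and $g=\alpha+b(\cos(t+c)\,\dot{x}^{1}+\sin(t+c)\,\dot{x}^{2})$. Both $f$ and the one-form part of $g$ are homogeneous of degree one in $(\dot{x}^{1},\dot{x}^{2})$, hence their second derivatives in the velocity variables vanish identically. Therefore $h_{\dot{x}^{i}\dot{x}^{j}}=\lambda\,\alpha_{\dot{x}^{i}\dot{x}^{j}}$, and the entire problem collapses to computing the Hessian of $\alpha$ with respect to the velocities.

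The next step is the explicit computation of that Hessian. Writing $\|\dot{x}\|=\alpha$, a direct differentiation gives
\begin{equation*}
\alpha_{\dot{x}^{1}\dot{x}^{1}}=\frac{(\dot{x}^{2})^{2}}{\|\dot{x}\|^{3}},\quad \alpha_{\dot{x}^{2}\dot{x}^{2}}=\frac{(\dot{x}^{1})^{2}}{\|\dot{x}\|^{3}},\quad \alpha_{\dot{x}^{1}\dot{x}^{2}}=-\frac{\dot{x}^{1}\dot{x}^{2}}{\|\dot{x}\|^{3}}.
\end{equation*}
Contracting with $(y^{1},y^{2})$ and collecting terms, one obtains
\begin{equation*}
\sum_{i,j=1}^{2}h_{\dot{x}^{i}\dot{x}^{j}}\,y^{i}y^{j}=\frac{\lambda}{\|\dot{x}\|^{3}}\left(\dot{x}^{2}y^{1}-\dot{x}^{1}y^{2}\right)^{2}.
\end{equation*}
Conceptually, this is precisely the expected degeneracy structure: since $\alpha$ is positively $1$-homogeneous, Euler's theorem forces its velocity-Hessian to annihilate the radial direction $\dot{x}$, while positivity of the fundamental tensor of $\alpha$ makes the form positive semidefinite with exactly this one-dimensional kernel.

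Finally, since along $\gamma_{0}$ we have already established $\lambda<0$ (see \ref{eq1.22}), the displayed expression is manifestly non-positive, which is the inequality \ref{1.1.61}. Equality forces $\dot{x}^{2}y^{1}-\dot{x}^{1}y^{2}=0$, which is exactly the condition that $(y^{1},y^{2})$ be proportional to $(\dot{x}^{1},\dot{x}^{2})$, i.e.\ $(y^{1},y^{2})=k(\dot{x}^{1},\dot{x}^{2})$ for some scalar $k$. This supplies the last requirement, condition (5), of Theorem \ref{thm1.01}.

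I do not anticipate a genuine obstacle here; the proof is a short calculation. The only point requiring care is the bookkeeping observation that both the area integrand $f$ and the $\beta$-contribution to $g$ are linear in the velocities and are therefore invisible to the velocity-Hessian. Once this is noted, the sign of the form is decided entirely by the previously established negativity of $\lambda$, and the equality case is read off directly from the perfect-square factor.
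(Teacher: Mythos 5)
Your proof is correct and takes essentially the same route as the paper: both reduce $h_{\dot{x}^{i}\dot{x}^{j}}$ to $\lambda\,\alpha_{\dot{x}^{i}\dot{x}^{j}}$, arrive at the identical perfect-square expression $\frac{\lambda}{\|\dot{x}\|^{3}}\left(\dot{x}^{2}y^{1}-\dot{x}^{1}y^{2}\right)^{2}$, and conclude from $\lambda<0$ that the form is nonpositive, with equality exactly when $(y^{1},y^{2})=k(\dot{x}^{1},\dot{x}^{2})$. One caution on wording: the vanishing of the velocity-Hessians of $f$ and of the one-form part of $g$ follows from their \emph{linearity} in $(\dot{x}^{1},\dot{x}^{2})$, not from positive $1$-homogeneity as you assert ($\alpha$ itself is $1$-homogeneous yet has nonvanishing velocity-Hessian); since those terms are in fact linear, your conclusion is unaffected.
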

\begin{proof}
Differentiating \ref{1.1.28} with respect to $\dot{x}^1$ and \ref{1.1.29} with respect to $\dot{x}^2$ we yield,
\begin{equation}
h_{\dot{x}^1\dot{x}^1}= \frac{\lambda (\dot{x}^2)^2 }{((\dot{x}^1)^2+(\dot{x}^2)^2)^{\frac{3}{2}}}, ~ h_{\dot{x}^1\dot{x}^2}=-\frac{ \dot{x}^1\dot{x}^2 }{((\dot{x}^1)^2+(\dot{x}^2)^2)^{\frac{3}{2}}} , ~h_{\dot{x}^2\dot{x}^2}= \frac{\lambda (\dot{x}^1)^2 }{((\dot{x}^1)^2+(\dot{x}^2)^2)^{\frac{3}{2}}}.
\end{equation}
Therefore,
 \begin{equation}\label{1.1.63}
\sum\limits_{i,j =1}^{2}h_{\dot{x}^{i}\dot{x}^{j}}y^{i}y^{j} = \frac{\lambda}{((\dot{x}^1)^2+(\dot{x}^2)^2)^{\frac{3}{2}}}(\dot{x}^2y^1-\dot{x}^1y^2)^2 = \frac{\lambda}{a}(y^{1}\cos t+y^{2}\sin t)^{2}.
\end{equation}
As $\lambda$ is negative and $a>0$, the inequality \ref{1.1.61} follows. Clearly the equality holds if and only if $(y^1,y^2)=k(\dot{x}^1,\dot{x}^2)$.
\end{proof}
\vspace{0.4cm}

\textbf{Proof of Theorem \ref{thm1}:}\\
In propositions \ref{prop1.01}-\ref{prop1.06}, we have shown that $\gamma_0$ satisfies all the conditions of Theorem \ref{thm1.01}. Hence, $\gamma_0$ is a solution of the isoperimetric problem with respect to the Busemann-Hausdorff measure. Similarly, it also follows that $\gamma_0$ is a solution of the isoperimetric problem with respect to the Holmes-Thompson, maximum, minimum volume forms as these volume forms are just the scalar multiples of Busemann-Hausdorff volume form.

\vspace{0.4cm}
\textbf{Proof of Corollary \ref{cor1}:}\\ Choosing $\theta=0$ in \ref{eq1.22}, we obtain $\lambda=-\frac{a}{(1-b^2)^{\frac{3}{2}}}<0$ and $F=\sqrt{(dx^1)^2+(dx^2)^2}+ bdx^1, ~b>0$. Therefore, following the same steps of Theorem \ref{thm1}, we complete the proof.
\vspace{0.4cm}

\textbf{Proof of Corollary \ref{cor2}:}\\
Since $\tau:\mathbb{R}^2\to \mathbb{R}$ is a smooth function, we have $d\tau=\frac{\partial \tau}{\partial x^1}dx^1+\frac{\partial \tau}{\partial x^2}dx^2$. Therefore, $\|d\tau\|^2=\left( \frac{\partial \tau}{\partial x^1}\right)^2 +\left( \frac{\partial \tau}{\partial x^2}\right)^2=v $, for some constant $v$. Writing $\frac{\partial \tau}{\partial x^1}=p$ and $\frac{\partial \tau}{\partial x^2}=q$, we obtain the following partial differential equation:
\begin{equation}
v=p^2+q^2.
\end{equation}
Using Charpit's method (p.69, \cite{INS}) we obtain that $p=b_1$ and $q=b_2$, where $b_1$ and $b_2$ are some real constants with $b_1^2+b_2^2<1$. Hence, $d\tau=b_1dx^1+b_2dx^2$. Therefore, we can write, $b_1=b\cos c$ and $b_2=b\sin c$, for some constants $b$ and $c$. Therefore, choosing $\theta\equiv c$, in Theorem \ref{thm1} we conclude the proof.

\end{document}